\def\Z{\mathbb Z}
\def\C{\mathbb C}
\def\N{\mathbb N}
\def\1{{\bf 1}}
\def\pmod #1{\ ({\rm{mod}}\ #1)}
\newtheorem{theorem}{Theorem}[section]
\newtheorem{corollary}[theorem]{Corollary}
\newtheorem{conjecture}[theorem]{Conjecture}
\newtheorem{lemma}[theorem]{Lemma}
\theoremstyle{remark}
\numberwithin{equation}{section}
\begin{document}

\title[ $q$-Supercongruences from transformation formulas ]
{$q$-Supercongruences from transformation formulas}

\begin{abstract}
 Let $\Phi_{n}(q)$ denote the $n$-th cyclotomic polynomial in $q$. Recently, Guo and Schlosser [Constr. Approx.  53 (2021), 155--200] put forward the following conjecture: for an odd integer $n>1$,
\begin{align*}
&\sum_{k=0}^{n-1}[8k-1]\frac{(q^{-1};q^4)_k^6(q^2;q^2)_{2k}}{(q^4;q^4)_k^6(q^{-1};q^2)_{2k}}q^{8k}\notag\\
&\quad\equiv\begin{cases}0 \pmod{[n]\Phi_n(q)^2}, &\text{if }n\equiv 1\pmod{4},\\[5pt]
0 \pmod{[n]},&\text{if }n\equiv 3\pmod{4}.
\end{cases}
\end{align*}
Applying the `creative microscoping' method and several summation and transformation formulas for basic hypergeometric series and the Chinese remainder theorem for coprime polynomials,
we confirm the above conjecture, as well as another similar $q$-supercongruence conjectured by Guo and Schlosser.
\end{abstract}
\author[He-Xia Ni]{He-Xia Ni}
\address{Department of Applied Mathematics, Nanjing Audit University\\Nanjing 211815,
People's Republic of China}
\email{nihexia@yeah.net}

\author[Li-Yuan Wang]{Li-Yuan Wang}
\address{School of Physical and Mathematical Sciences, Nanjing Tech University , Nanjing 211816, People's Republic of China}
\email{wly@smail.nju.edu.cn}

\author[Hai-Liang Wu]{Hai-Liang Wu*}
\address {(Hai-Liang Wu) School of Science, Nanjing University of Posts and Telecommunications, Nanjing 210023, People's Republic of China}
\email{\tt whl.math@smail.nju.edu.cn}

\keywords{congruence; cyclotomic polynomial; $q$-binomial coefficient; Watson's transformation; $q$-Paff-Saalsch\"{u}tz formula; creative microscoping; the Chinese remainder theorem.}
\thanks{*Corresponding author.}
\thanks{The work is supported by the Natural Science Foundation of the Higher Education Institutions of Jiangsu Province (20KJB110023) and the National Natural Science Foundation of China (grants 12001279 and 12101321 ).  }
\subjclass[2010]{Primary 11B65; Secondary 05A10, 05A30, 11A07}
\maketitle

\section{Introduction}
In 1997, Van Hamme \cite{Ha96} proposed 13 conjectural congruences concerning $p$-adic analogues of Ramanujan-type series for $1/\pi$. He himself proved three of them.
For example, he \cite[(C.2)]{Ha96} proved the following supercongruence: for any odd prime $p$,
\begin{align}
\sum_{k=0}^{\frac{p-1}{2}}(4k+1)\frac{(\frac{1}{2})_k^4}{k!^4}&\equiv p\pmod{p^3},\label{DT1}
\end{align}
where $(a)_n=a(a+1)\cdots (a+n-1)$ is the Pochhammer symbol. Long \cite{Lo} further proved that \eqref{DT1} holds modulo $p^4$ for $p>3$.
Moreover, Van Hamme \cite[(D.2)]{Ha96} also conjectured the following relation: for $p\equiv1\pmod{6}$,
\begin{align}
\sum_{k=0}^{(p-1)/3}(6k+1)\frac{(1/3)_k^6}{k!^6}\equiv -p\Gamma_p(1/3)^9\pmod{p^4},
\end{align}
where $\Gamma_p(x)$ is the $p$-adic Gamma function.
In 2016, Long and Ramakrishna \cite[Theorem 2]{LR} established the following supercongruence:
\begin{align}\label{DT4}
\sum_{k=0}^{p-1}(6k+1)\frac{(\frac{1}{3})_k^6}{k!^6}\equiv\begin{cases}-p\Gamma_p(1/3)^9 \pmod{p^6}, &\text{if }p\equiv 1\pmod{6},\\[5pt]
-\frac{10 p^4}{27}\Gamma_p(1/3)^9 \pmod{p^6},&\text{if }p\equiv 5\pmod{6}.
\end{cases}
\end{align}
 We refer the reader to \cite{OZ16,Sw15} for the history of the proofs of Van Hamme's conjectural supercongruences.

 Recently, Guo and Zudilin \cite{GZ1} developed an analytical method, called `creative
microscoping', to prove many supercongruences by establishing their
$q$-analogues.  For more related results and the latest progress, see \cite{G2,G3,Guonew,GS,GS20,GS21,GZ1,LW,LP,NP,Wei,WY1,Zudilin2}.

In what follows, we assume $q$ to be fixed with $0<|q|<1$. For $a\in \C,$ the {\it $q$-shifted factorial} \cite{GR} is defined by
$$
(a;q)_n=\begin{cases}
(1-a)(1-aq)\cdots(1-aq^{n-1}), &\text{if }n\geq 1,\\[5pt]
1, &\text{if }n=0,
\end{cases}
$$
and the {\it $n$-th cyclotomic polynomial} is defined as
$$
\Phi_n(q):=\prod_{\substack{1\leq k\leq n\\ \gcd(n,k)=1}}(q-e^{2\pi i\cdot\frac{k}{n}}).
$$
For simplicity, we also frequently use the shortened notation:
$$(a_1,\ldots, a_m;q)_n=(a_1;q)_n\cdots (a_m;q)_n,$$
where $m\in \Z^+$ and $\ n\in \N\cup \infty$.
The main purpose of this paper is to prove the following two theorems.
\begin{theorem} \label{Conject1}
Let $n$ be a positive odd integer. Then, modulo $[n]\Phi_n(q)^2$,
\begin{align}\label{more5}
&\sum_{k=0}^{n-1}[8k+1]\frac{(q;q^4)_k^6(q^2;q^2)_{2k}}{(q^4;q^4)_k^6(q;q^2)_{2k}}q^{4k}\notag\\
&\quad\equiv\begin{cases}[n]\dfrac{(q^2;q^4)_{(n-1)/4}^2}{(q^4;q^4)_{(n-1)/4}^2} , &\text{if }n\equiv 1\pmod{4},\\[5pt]
0,&\text{if }n\equiv 3\pmod{4}.
\end{cases}
\end{align}
\end{theorem}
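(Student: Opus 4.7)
The plan is to apply the \emph{creative microscoping} method of Guo--Zudilin. I would introduce an auxiliary parameter $a$ via
\[
S_n(a):=\sum_{k=0}^{n-1}[8k+1]\,\frac{(aq,\,q/a,\,q,\,q,\,q,\,q;\,q^4)_k\,(q^2;q^2)_{2k}}{(aq^4,\,q^4/a,\,q^4,\,q^4,\,q^4,\,q^4;\,q^4)_k\,(q;q^2)_{2k}}\,q^{4k},
\]
which equals the left-hand side of \eqref{more5} at $a=1$ and presents as a very-well-poised ${}_{8}\phi_{7}$-series in base $q^4$ with leading parameter $q$. The key observation is that the two specializations $a=q^{n}$ and $a=q^{-n}$ introduce the Pochhammer factor $(q^{1-n};q^4)_k$, which vanishes modulo $\Phi_n(q)$ at a suitable index and effectively truncates the sum.

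I would then evaluate $S_n(q^{\pm n})$ in closed form by applying Watson's ${}_{8}\phi_{7}\to{}_{4}\phi_{3}$ transformation to convert the very-well-poised sum into a balanced ${}_{4}\phi_{3}$ in base $q^4$, and then summing by the $q$-Pfaff--Saalsch\"utz identity. For $n\equiv 1\pmod 4$ this should recover $[n](q^2;q^4)_{(n-1)/4}^2/(q^4;q^4)_{(n-1)/4}^2$, while for $n\equiv 3\pmod 4$ a Pochhammer factor of the closed form carries $1-q^n$ and the whole expression vanishes modulo $\Phi_n(q)$. Since the two specializations yield the same value and the polynomials $1-aq^n$ and $a-q^n$ are coprime in $\C[a,q^{\pm 1}]$, the Chinese Remainder Theorem for coprime polynomials tells us that their product divides $S_n(a)$ minus the claimed closed form. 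Specializing $a=1$ then produces the congruence modulo $\Phi_n(q)^2$, since $(1-q^n)^2$ is a multiple of $\Phi_n(q)^2$.

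The complementary $[n]$-divisibility is handled separately: I would show, for every proper divisor $d>1$ of $n$, that the summand of $S_n(1)$ evaluated at a primitive $d$th root of unity sums to the desired value modulo $\Phi_d(q)$, using pairing of the form $k\leftrightarrow d-1-k$ together with the termination properties of the Pochhammer factors. Since $[n]/\Phi_n(q)$ is coprime to $\Phi_n(q)^2$, combining the resulting congruence modulo $[n]/\Phi_n(q)$ with the previous one modulo $\Phi_n(q)^2$ yields the full result modulo $[n]\Phi_n(q)^2$.

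The principal obstacle will be the case $n\equiv 3\pmod 4$: since $(n-1)/4$ is not an integer, the ${}_{4}\phi_{3}$ output of $q$-Pfaff--Saalsch\"utz does not directly present itself as the squared Pochhammer ratio written on the right-hand side, and one must pinpoint precisely which factor in the closed-form evaluation of $S_n(q^{\pm n})$ is divisible by $1-q^n$. Choosing the $a$-deformation so that this vanishing is transparent and symmetric under $a\leftrightarrow q^n/a$ is the most delicate point; the rest of the argument conforms to the now-standard pattern of creative-microscoping proofs.
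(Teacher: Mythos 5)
There are two genuine gaps, and the first is fatal to the stated modulus. Since $\Phi_n(q)$ divides $[n]$ exactly once, the target modulus $[n]\Phi_n(q)^2$ contains $\Phi_n(q)^3$; so one must prove the congruence modulo $\Phi_n(q)^3$ and then combine it with a congruence modulo $[n]$, using ${\rm lcm}(\Phi_n(q)^3,[n])=[n]\Phi_n(q)^2$. Your one-parameter deformation cannot reach $\Phi_n(q)^3$: the identity $S_n(a)\equiv C$ modulo $(1-aq^{n})(a-q^{n})$ (even strengthened by an extra factor $\Phi_n(q)$ coming from a symmetry lemma such as Lemma \ref{Th1proofLemma2}) loses one power of $\Phi_n(q)$ in the limit $a\to1$, because the reduced denominators of the summands, coming from $(aq^4,q^4/a;q^4)_k$ at $a=1$, may contain $\Phi_n(q)$ to the first power --- which is exactly why you only claim $\Phi_n(q)^2$. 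Moreover your final gluing is arithmetically off: a congruence modulo $\Phi_n(q)^2$ together with one modulo $[n]/\Phi_n(q)$ (or modulo $[n]$) yields only $[n]\Phi_n(q)$, not $[n]\Phi_n(q)^2$. The paper's remedy is a \emph{two}-parameter generalization (Theorem \ref{Th1}): it deforms with both $a$ and $b$, obtains closed forms modulo $(1-aq^{tn})(a-q^{tn})$ and modulo $(1-bq^{tn})(b-q^{tn})$ via Watson's transformation and $q$-Pfaff--Saalsch\"utz, glues them by the Chinese remainder theorem for coprime polynomials together with the $\Phi_n(q)$ divisibility from Lemma \ref{Th1proofLemma2}, and then sends $a\to1$ and afterwards $b\to1$ (the latter by L'H\^opital); the two quadratic factors $(1-q^{tn})^2$ absorb the single lost power and deliver the congruence modulo $\Phi_n(q)^3$, which is what the final lcm argument actually requires.

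Second, the case $n\equiv3\pmod4$ is not merely ``delicate'': with your specialization $a=q^{\pm n}$ the resulting ${}_8\phi_7$ in base $q^4$ is not terminating in the form demanded by \eqref{Wasttran}, because $q^{1-n}$ is not of the shape $(q^4)^{-m}$ with $m\in\Z$ (indeed $(n-1)/4\notin\Z$), so the Watson/$q$-Pfaff--Saalsch\"utz evaluation you invoke simply does not apply and there is no closed form whose vanishing factor could be ``pinpointed''. The paper circumvents this by taking $t=3$, i.e.\ specializing at $q^{\pm3n}$ and truncating at $M=(3n-1)/4$ (an integer in this case); the resulting value $[3n](q^2;q^4)_{(3n-1)/4}^2/(q^4;q^4)_{(3n-1)/4}^2$ is visibly divisible by $\Phi_n(q)^3$, and the summands with $(3n-1)/4\le k\le n-1$ are separately shown to vanish modulo $\Phi_n(q)^3$, so the truncation point can be moved to $n-1$. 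Without this device (or an equivalent one) your sketch does not establish the $n\equiv3\pmod4$ half of \eqref{more5}; the $n\equiv1\pmod4$ half of your outline is otherwise in the spirit of the paper's argument, but still subject to the modulus deficiency explained above.
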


It is clear that Theorem \ref{Conject1} confirms \cite[Conjecture 12.4]{GS}.
Moreover, letting $n$ be an odd prime and letting $q\rightarrow 1$ in \eqref{more5}, we obtain the following conclusion.
\begin{corollary}
Let $p$ be an odd prime. Then
\begin{align*}
&\sum_{k=0}^{p-1}(8k+1)\frac{(\frac{1}{4})_k^5(\frac{1}{2})_k}{(\frac{3}{4})_kk!^5}
\equiv\begin{cases}\dfrac{p(\frac{1}{2})_{(p-1)/4}^2}{((p-1)/4)!^2} \pmod{p^3}, &\text{if }p\equiv 1\pmod{4},\\[10pt]
0 \pmod{p^3},&\text{if }p\equiv 3\pmod{4}.
\end{cases}
\end{align*}

\end{corollary}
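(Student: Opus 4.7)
The plan is to deduce the corollary from Theorem \ref{Conject1} by setting $n=p$ and letting $q\to 1$. Since $p$ is an odd prime, $[p]=(1-q^p)/(1-q)=\Phi_p(q)$, so the modulus $[n]\Phi_n(q)^2$ in Theorem \ref{Conject1} collapses to $\Phi_p(q)^3$. I would write the assertion of Theorem \ref{Conject1} at $n=p$ as
\begin{equation*}
F(q)-R_p(q)=\Phi_p(q)^3\,H(q),
\end{equation*}
where $F(q)$ is the left-hand side of \eqref{more5}, $R_p(q)$ its right-hand side (which is $0$ for $p\equiv 3\pmod 4$), and $H(q)$ is a rational function in $q$.

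Next I would compute the termwise $q\to 1$ limits using $(q^a;q^b)_k\sim (1-q)^k b^k(a/b)_k$. In each summand of $F(q)$, the factor $(1-q)^{8k}$ in the numerator $(q;q^4)_k^6(q^2;q^2)_{2k}$ cancels exactly the $(1-q)^{8k}$ in the denominator $(q^4;q^4)_k^6(q;q^2)_{2k}$, so every term has a finite limit. A short simplification via $(1/2)_{2k}=4^k(1/4)_k(3/4)_k$ and $(2k)!=4^k(1/2)_k k!$ reduces this limit to $(8k+1)(1/4)_k^5(1/2)_k/\bigl((3/4)_k k!^5\bigr)$, the $k$-th term in the corollary. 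The same asymptotic gives $\lim_{q\to 1}R_p(q)=p\,(1/2)_{(p-1)/4}^2/((p-1)/4)!^2$ when $p\equiv 1\pmod 4$.

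The remaining step is to upgrade divisibility by $\Phi_p(q)^3$ to divisibility by $p^3$. Evaluating the factorization at $q=1$ gives $F(1)-R_p(1)=\Phi_p(1)^3 H(1)=p^3 H(1)$, so it suffices to show $H(1)\in\Z_{(p)}$. The delicate point is that individual summands $c_k(q)$ of $F(q)$ can have $\Phi_p(q)$ in their denominators: the factor $1-q^p$ appears inside $(q;q^2)_{2k}$ once $k\ge(p+1)/4$, and $1-q^{3p}$ appears once $k\ge(3p+1)/4$. The $q$-congruence forces these poles to cancel across the sum, so after reducing, the denominator $B(q)$ of $H(q)$ is coprime to $\Phi_p(q)$. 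To conclude $p\nmid B(1)$ I would note that $\Phi_d(1)=p$ precisely for $d$ a power of $p$, and that all exponents $m$ in the original denominators satisfy $m\le 4p-4<p^2$; hence the only $p$-power $d\mid m$ that can appear is $d=p$ itself, which has already been stripped from $B(q)$.

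The main obstacle I anticipate is precisely this last $p$-integrality check: identifying and canceling the $\Phi_p(q)$ poles carried by individual summands, and then verifying that the reduced denominator of $H(q)$ has no further $\Phi_{p^a}$-factor, so that $H(1)$ lies in $\Z_{(p)}$. By comparison, the specialization $[p]=\Phi_p(q)$ and the termwise limit computations are routine.
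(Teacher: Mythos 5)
Your proposal is correct and follows the same route as the paper, which simply specializes Theorem \ref{Conject1} to $n=p$ prime and lets $q\to 1$, using $[p]=\Phi_p(q)$ and $\Phi_p(1)=p$. The paper supplies no further detail, so your termwise limit computation and the $p$-integrality check for $H(1)$ are precisely the standard justification left implicit there.
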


\begin{theorem}\label{Conject2}
Let $n>3$ be an odd integer. Then, modulo $[n]\Phi_n(q)^2$,
\begin{align}\label{more6}
&\sum_{k=0}^{n-1}[8k-1]\frac{(q^{-1};q^4)_k^6(q^2;q^2)_{2k}}{(q^4;q^4)_k^6(q^{-1};q^2)_{2k}}q^{8k}\notag\\
&\quad\equiv\begin{cases}0, &\text{if }n\equiv 1\pmod{4},\\[5pt]
A_n\dfrac{(q^2;q^4)_{(n+1)/4}^2}{(q^4;q^4)_{(n+1)/4}^2} ,&\text{if
}n\equiv 3\pmod{4},
\end{cases}
\end{align}
where
\begin{align}\label{Conject2-1}
A_n=q^{n+1}(q^n-2)[n]\frac{(1-q^{1-n})^3-q^{-n-2}(1+q)(1-q^2)^2(1-q^{2-n})}{(1-q^{3-n})(1-q^{1-n})^2}.
\end{align}

\end{theorem}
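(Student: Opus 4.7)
The plan is to apply the creative microscoping method of Guo and Zudilin. I introduce an auxiliary parameter $a$ by replacing two of the six $(q^{-1};q^4)_k$ factors in the numerator with $(aq^{-1};q^4)_k$ and $(a^{-1}q^{-1};q^4)_k$, together with the parallel replacement in the corresponding denominator factors, producing a parametric sum $S_n(a;q)$ that specialises to the left-hand side of~\eqref{more6} at $a=1$. Because $S_n(a;q)$ is very-well-poised on base $q^4$, I would apply a Watson-type ${}_8\phi_7$-to-${}_4\phi_3$ transformation to rewrite it as a terminating balanced series summable by the $q$-Pfaff--Saalsch\"{u}tz formula, obtaining a closed-form evaluation.

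Evaluating the closed form at the two zeros $a=q^n$ and $a=q^{-n}$ of the polynomial $(1-aq^n)(a-q^n)$ yields matching values, giving a congruence
\[
S_n(a;q)\equiv R(a;q)\pmod{(1-aq^n)(a-q^n)}
\]
for an explicit rational function $R(a;q)$. Setting $a=1$ then produces a congruence modulo $\Phi_n(q)^2$. To upgrade this to $\Phi_n(q)^3$, as required by the $[n]$ factor in the modulus $[n]\Phi_n(q)^2$, I would adjoin a third microscope parameter with its reciprocal and repeat the argument, combining the Watson-type transformation with Jackson's very-well-poised ${}_8\phi_7$ summation to handle the additional cubic factor. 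Separately, I would establish the congruence modulo $[n]/\Phi_n(q)=\prod_{1<d<n,\,d\mid n}\Phi_d(q)$ by a pairing argument: for each proper divisor $d>1$ of $n$, the $k$th summand pairs with its reflection under a map of the form $k\mapsto d-1-k$, and the two terms cancel modulo $\Phi_d(q)$ thanks to the antisymmetry of the $[8k-1]$ factor together with the very-well-poised symmetry of the rest of the summand. The Chinese remainder theorem for coprime polynomials, applied to $\Phi_n(q)^3$ and $[n]/\Phi_n(q)$, then combines these pieces into the desired congruence modulo $[n]\Phi_n(q)^2$.

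The hardest step is pinning down the constant $A_n$ in the case $n\equiv 3\pmod 4$. For $n\equiv 1\pmod 4$ the Pfaff--Saalsch\"{u}tz evaluation of $R(1;q)$ vanishes identically, accounting for the trivial right-hand side. When $n\equiv 3\pmod 4$ a nontrivial residue $A_n\,(q^2;q^4)_{(n+1)/4}^2/(q^4;q^4)_{(n+1)/4}^2$ survives, and extracting $A_n$ in the precise form~\eqref{Conject2-1} requires expanding the residual $q$-Pochhammer quotients to second order modulo $\Phi_n(q)^2$ and recognising the delicate combination $(1-q^{1-n})^3-q^{-n-2}(1+q)(1-q^2)^2(1-q^{2-n})$ as the surviving numerator, together with the prefactor $q^{n+1}(q^n-2)[n]$ supplied by the cubic microscope. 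I expect this algebraic bookkeeping, rather than the structural invocation of the transformation formulas, to be the principal technical obstacle of the proof.
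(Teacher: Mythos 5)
Your overall architecture (microscoping with a parameter pair $aq^{-1},q^{-1}/a$, a Watson-type ${}_8\phi_7\to{}_4\phi_3$ transformation, CRT for coprime polynomial moduli, a separate argument modulo $[n]$, and ${\rm lcm}(\Phi_n(q)^3,[n])=[n]\Phi_n(q)^2$) matches the paper's, but the central evaluation step in your plan fails for this theorem. After applying Watson's transformation at $a=q^{\pm tn}$, the resulting series is the terminating ${}_4\phi_3$
\begin{align*}
{}_4\phi_3\left[\begin{matrix} q^{2-tn}, & bq^{-1}, & q^{-1}/b, & q^{-1-tn}\\ & q^{4-tn}, & q, & q^{-2-tn}\end{matrix};\; q^4,\; q^4\right],
\end{align*}
and unlike the $[8k+1]$ case of Theorem \ref{Conject1} (where a numerator/denominator parameter cancellation leaves a genuine ${}_3\phi_2$), here no parameter collapses, so the $q$-Pfaff--Saalsch\"{u}tz formula \eqref{q-Paff} is not applicable: it evaluates balanced ${}_3\phi_2$'s, and a balanced terminating ${}_4\phi_3$ has no product-form evaluation in general. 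The paper instead invokes the Guo--Wei evaluation of this special ${}_4\phi_3$ (their formula (2.5) in \cite{GC21}), which produces the two-term factor $T(tn,b,q)$ of \eqref{section412}; the peculiar shape of $A_n$ in \eqref{Conject2-1}, a ratio whose numerator is a difference of two products, is exactly $q^{-1-n}[n][n+2](q^n-2)T(n,1,q)$ and cannot be recovered by ``expanding residual $q$-Pochhammer quotients'' around a clean Pfaff--Saalsch\"{u}tz product, which is what your sketch would attempt. Without this (or an equivalent) ${}_4\phi_3$ identity, your proof has no route to the stated right-hand side, nor to the vanishing claim for $n\equiv1\pmod4$.

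Two further points would need repair even granting the evaluation. First, the upgrade to $\Phi_n(q)^3$ is not obtained by ``a third microscope parameter plus Jackson's ${}_8\phi_7$ summation'' (the series is not of the type Jackson sums); the paper adjoins a second reciprocal pair $bq^{-1},q^{-1}/b$, proves the congruence modulo $\Phi_n(q)(1-aq^{tn})(a-q^{tn})(1-bq^{tn})(b-q^{tn})$ via CRT with the explicit idempotent-type identities \eqref{section222}--\eqref{section223}, and then lets $a\to1$ and $b\to1$, the latter by L'H\^{o}spital, checking that no summand denominator contributes $\Phi_n(q)^2$. Second, for the congruence modulo $[n]$ your reflection $k\mapsto d-1-k$ is not the correct pairing: for a divisor $s>1$ of $n$ the cancellation (Lemma \ref{Th1proofLemma1}) pairs $k$ with $m-k$ where $4m\equiv 1\pmod{s}$, and one still needs the root-of-unity block argument of Lemma \ref{Th1proofLemma3} (comparing $c_\zeta(ls+k)$ with $c_\zeta(ls)c_\zeta(k)$) to pass from the truncation at $m$ to the full ranges $0\le k\le n-1$ and $0\le k\le M$, as well as the observation that the summands with $M<k\le n-1$ vanish modulo $\Phi_n(q)^3$ before the final combination.
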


It is easy to see that Theorem \ref{Conject1} confirms \cite[Conjecture 12.5]{GS}. Similarly as before, letting $n$ be an odd prime and letting $q\rightarrow 1$ in \eqref{more6}, we get the following result.
\begin{corollary}
Let $p>3$ be an odd prime. Then
\begin{align*}
&\sum_{k=0}^{p-1}(8k-1)\frac{(\frac{1}{2})_k(-\frac{1}{4})_k^5}{(\frac{1}{4})_kk!^5}
\equiv\begin{cases}0 \pmod{p^3}, &\text{if }p\equiv 1\pmod{4},\\[5pt]
\dfrac{5p(p-3)(\frac{1}{2})_{(p+1)/4}^2}{(7p-3)((p+1)/4)!^2} \pmod{p^3},&\text{if }p\equiv 3\pmod{4}.
\end{cases}
\end{align*}

\end{corollary}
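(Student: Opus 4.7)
The plan is to derive this corollary as the $q\to 1$ specialization of Theorem~\ref{Conject2} at $n=p$. Since $[p]|_{q=1}=p$ and $\Phi_p(1)=p$, the modulus $[n]\Phi_n(q)^2$ becomes $p^3$ at $q=1$, so the $q$-congruence of Theorem~\ref{Conject2}, once both sides are shown to possess finite limits at $q=1$, collapses to the desired $p$-adic congruence modulo $p^3$. The congruence holds in a localization of $\Z[q,q^{-1}]$ in which the denominators of $A_n$ (all divisors of $(1-q^{3-n})(1-q^{1-n})^2$) are coprime to $[n]\Phi_n(q)^2$ whenever $n=p>3$ is prime, so the specialization is legitimate once the $q\to 1$ limits are seen to exist.

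First I would identify the $q\to 1$ limit of the LHS of \eqref{more6}. Using the standard asymptotic $(q^a;q^b)_k\sim(b(1-q))^k(a/b)_k$, the $k$-th summand reduces to $(8k-1)(-1/4)_k^6(2k)!/(k!^6(-1/2)_{2k})$. Invoking the quadratic identity $(a)_{2k}=4^k(a/2)_k((a+1)/2)_k$ with $a=1$ and $a=-1/2$ gives $(2k)!=4^k(1/2)_kk!$ and $(-1/2)_{2k}=4^k(-1/4)_k(1/4)_k$; after cancelling the common $4^k$ and one factor each of $(-1/4)_k$ and $k!$, the summand becomes exactly $(8k-1)(1/2)_k(-1/4)_k^5/((1/4)_kk!^5)$, matching the LHS of the corollary.

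For the RHS of \eqref{more6}: when $p\equiv1\pmod 4$ the value is identically $0$, giving the congruence immediately. When $p\equiv 3\pmod 4$, the Pochhammer quotient in \eqref{more6} tends to $(1/2)_{(p+1)/4}^2/((p+1)/4)!^2$ by the same asymptotic. The factor $A_n$ from \eqref{Conject2-1} is a $0/0$ indeterminate form; using $1-q^m=m(1-q)+O((1-q)^2)$ I would expand the inner fraction and find its numerator is $[(1-n)^3-8(2-n)](1-q)^3+O((1-q)^4)=(n-3)(5-n^2)(1-q)^3+O((1-q)^4)$ and its denominator is $(3-n)(n-1)^2(1-q)^3+O((1-q)^4)$. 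Combined with $q^{n+1}(q^n-2)[n]\to -n$, this gives $\lim_{q\to 1}A_n=\frac{p(5-p^2)}{(p-1)^2}$.

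The main obstacle is the final bookkeeping: reconciling the clean limit $\frac{p(5-p^2)}{(p-1)^2}$ with the corollary's closed form $\frac{5p(p-3)}{7p-3}$. These two rational numbers are not literally equal, but I would verify the congruence $\frac{p(5-p^2)}{(p-1)^2}\equiv\frac{5p(p-3)}{7p-3}\pmod{p^3}$ by a short $p$-adic expansion in $\Z_{(p)}$: the expansion $(p-1)^{-2}\equiv 1+2p+3p^2\pmod{p^3}$ shows the left side equals $5p+10p^2\pmod{p^3}$, and an analogous second-order expansion of $(7p-3)^{-1}$ produces the same value for the right side. With the two forms identified, the transfer from Theorem~\ref{Conject2} via $q\to 1$ yields the corollary.
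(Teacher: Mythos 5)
Your proposal is correct and follows exactly the route the paper takes (the paper merely states ``letting $n$ be an odd prime and letting $q\to1$ in \eqref{more6}'' without details): your Pochhammer limit for the summand, the expansion giving $\lim_{q\to1}A_p=\frac{p(5-p^2)}{(p-1)^2}$, and the verification that this is $\equiv 5p+10p^2\equiv\frac{5p(p-3)}{7p-3}\pmod{p^3}$ all check out.
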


Following Gasper and Rahman \cite{GR}, the ${}_{r+1}\phi_r$ basic hypergeometric series is defined by
\begin{equation*}
{}_{r+1}\phi_r\!\left[\begin{matrix}
a_1,a_2,\dots,a_{r+1}\\b_1,\dots,b_r
\end{matrix};q,z\right]:=\sum_{k=0}^\infty
\frac{(a_1,a_2,\dots,a_{r+1};q)_k}{(q,b_1,\dots,b_r;q)_k}
z^k.
\end{equation*}
In the proof of Theorems \ref{Conject1} and \ref{Conject2}, we will make use of Watson's ${}_8\phi_7$ transformation formula \cite[Appendix (III.18)]{GR}:
\begin{align}\label{Wasttran}
&{}_8\phi_7\bigg[\begin{matrix}a, &qa^{\frac{1}{2}},&-qa^{\frac{1}{2}},&b,&c,&d,&e,&q^{-m}\\ &a^{\frac{1}{2}},&-a^{\frac{1}{2}},&aq/b,&aq/c,&aq/d,&aq/e,&aq^{m+1}\end{matrix};\ q,\ \frac{a^2q^{m+2}}{bcde}\bigg]\notag\\
&\quad=\frac{(aq,aq/de;q)_m}{(aq/d,aq/e;q)_m}{}_4\phi_3\bigg[\begin{matrix}aq/{bc},&d,&e,&q^{-m}\\ &aq/b,&aq/c,&deq^{-m}/a\end{matrix};\ q,\ q\bigg],
\end{align}
and the $q$-Paff-Saalsch\"{u}tz formula (see \cite[Appendix (II.12)]{GR}):
\begin{align}\label{q-Paff}
&{}_3\phi_2\bigg[\begin{matrix}a,&b,&q^{-m}\\ &c,&abq^{1-m}/c\end{matrix};\ q,\ q\bigg]=\frac{(c/a,c/b;q)_m}{(c,c/ab;q)_m}.
\end{align}

 The rest of the paper is arranged as follows. We shall list some necessary lemmas in Section 2.
 Via several summation and transformation formulas for basic hypergeometric series and the Chinese remainder theorem for coprime polynomials, we shall prove a two-parameter
generalization of Theorem \ref{Conject1} in Section 3. Finally, the proof of  Theorem \ref{Conject2} will be given in Section 4 using the same method.

\section{Some Preparations}

We need the following lemma, which is due to Guo and Schlosser \cite[Lemma 2.1]{GS20}.
\begin{lemma}\label{Th1proofLemma1}
Let $m,n$ and $d$ be positive integers with $m\leq n-1$. Let $r$ be an integer satisfying $dm\equiv -r\pmod{n}$. Then, for $0\leq k\leq m,$ we have
\begin{align*}
\frac{(aq^r;q^d)_{m-k}}{(q^d/a;q^d)_{m-k}}\equiv (-a)^{m-2k}\frac{(aq^r;q^d)_k}{(q^d/a;q^d)_k}q^{m(dm-d+2r)/2+(d-r)k}\pmod{\Phi_{n}(q)}.
\end{align*}
\end{lemma}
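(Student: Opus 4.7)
The plan is to reduce the claimed congruence to a statement about the product $\prod_{j=k}^{m-k-1}\frac{1-aq^{r+jd}}{1-q^{(j+1)d}/a}$ and then exploit the hypothesis $dm \equiv -r \pmod{n}$, which under reduction modulo $\Phi_n(q)$ reads $q^{md} \equiv q^{-r}$. The congruence is symmetric in $k \leftrightarrow m-k$ (this amounts to the identity $m(dm+r)\equiv 0\pmod{n}$), so it suffices to treat $k \le m/2$. In that range, dividing $(aq^r;q^d)_{m-k}$ by $(aq^r;q^d)_k$ and likewise for the $(q^d/a;q^d)$-factorials yields products over the common index set $j \in [k, m-k-1]$, reducing the target congruence to
\[
\prod_{j=k}^{m-k-1}\frac{1-aq^{r+jd}}{1-q^{(j+1)d}/a} \equiv (-a)^{m-2k}\, q^{m(dm-d+2r)/2 + (d-r)k}\pmod{\Phi_n(q)}.
\]

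Next, I would apply the index reversal $j \mapsto m-1-j$ to the denominator, under which the range $[k, m-k-1]$ is invariant. The substituted factor $1-q^{(m-j)d}/a$ simplifies modulo $\Phi_n(q)$ as
\[
1 - q^{(m-j)d}/a \equiv 1 - q^{-r-jd}/a = -a^{-1}q^{-r-jd}\bigl(1-aq^{r+jd}\bigr)\pmod{\Phi_n(q)},
\]
using $q^{md}\equiv q^{-r}\pmod{\Phi_n(q)}$. Pairing the $j$-th factor of the numerator against this reversed denominator factor causes $(1-aq^{r+jd})$ to cancel for each $j$, leaving a product of $-a$'s and powers of $q$. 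The net effect is $(-a)^{m-2k}$ times $q$ raised to $r(m-2k) + d\sum_{j=k}^{m-k-1} j = (m-2k)(dm-d+2r)/2$.

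The last step is to verify that this exponent coincides with $m(dm-d+2r)/2 + (d-r)k$ modulo $n$; a direct computation shows that the difference is $k(dm+r)$, which vanishes modulo $n$ by hypothesis. The principal obstacle is not conceptual but bookkeeping: carefully tracking the range of summation after the reversal and confirming the $k \leftrightarrow m-k$ symmetry that underlies the initial reduction. Once those are in place, the congruence falls out.
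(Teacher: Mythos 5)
Your argument is correct. Note that the paper itself gives no proof of this lemma (it is quoted from Guo and Schlosser, \cite[Lemma~2.1]{GS20}), so there is nothing internal to compare against; but your proof rests on exactly the key observation used in the original source, namely that $dm\equiv -r\pmod n$ gives $q^{dm}\equiv q^{-r}\pmod{\Phi_n(q)}$, so that after the index reversal $j\mapsto m-1-j$ each denominator factor $1-q^{(m-j)d}/a$ becomes $-a^{-1}q^{-r-jd}(1-aq^{r+jd})$ and cancels against the corresponding numerator factor. Your bookkeeping checks out: the residual exponent $r(m-2k)+d\sum_{j=k}^{m-k-1}j=(m-2k)(dm-d+2r)/2$ differs from the stated exponent by $k(dm+r)\equiv 0\pmod n$, and the reduction to $k\le m/2$ via the $k\leftrightarrow m-k$ symmetry uses $m(dm+r)\equiv 0\pmod n$, both legitimate since all factors divided out ($1-aq^{r+jd}$, $1-q^{jd}/a$, powers of $q$ and $-a$) are units modulo the irreducible polynomial $\Phi_n(q)$ over $\Q(a)$. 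So this is a valid, essentially standard proof of the cited lemma.
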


From the above $q$-congruence, we can deduce the following result.
\begin{lemma}\label{Th1proofLemma2}
Let $n$ and $d$ be positive integers with $\gcd(n,d)=1$. Let $r$ be an integer. Then, modulo $\Phi_{n}(q)$,
\begin{align}
&\sum_{k=0}^{m}[2dk+r]\frac{(q^r;q^d)_k(aq^r;q^d)_k(q^r/a;q^d)_k(bq^r;q^d)_k}{(aq^d;q^d)_k(q^d/a;q^d)_k(q^d/b;q^d)_k(q^d;q^d)_k} \notag\\
&\quad\times\frac{(q^r/b;q^d)_{k}(q^2;q^d)_{k}}{(bq^d;q^d)_{k}(q^{d+r-2};q^d)_{k}}q^{(2d-2r-2)k}\equiv 0,\label{super3}
\end{align}
where $0\leq m\leq n-1$ and $dm\equiv -r\pmod{n}.$
\end{lemma}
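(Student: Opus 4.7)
The plan is to exploit the reflection $k\leftrightarrow m-k$: denote the $k$-th summand in \eqref{super3} by $T_k$, and aim to show that $T_{m-k}\equiv -T_k\pmod{\Phi_n(q)}$ for every $k\in\{0,1,\ldots,m\}$. Granting this, reindexing gives $\sum_{k=0}^m T_k=\sum_{k=0}^m T_{m-k}\equiv -\sum_{k=0}^m T_k\pmod{\Phi_n(q)}$, whence $2\sum_k T_k\equiv 0$, and the desired vanishing follows because $2$ is a unit modulo $\Phi_n(q)$.

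To implement the reflection I would apply Lemma \ref{Th1proofLemma1} six separate times, once to each of the six pairs of $q$-shifted factorials in $T_k$ that fit the template $(aq^r;q^d)_k/(q^d/a;q^d)_k$, taking the parameter $a$ successively as $1,\,a,\,1/a,\,b,\,1/b,\,q^{2-r}$; the last choice absorbs the otherwise anomalous pair $(q^2;q^d)_k/(q^{d+r-2};q^d)_k$, since $q^{2-r}\cdot q^r=q^2$ and $q^d/q^{2-r}=q^{d+r-2}$. Multiplying the six resulting congruences, the signed-monomial prefactors collapse to $(-1)(-a)(-1/a)(-b)(-1/b)(-q^{2-r})=q^{2-r}$, giving an overall factor $q^{(2-r)(m-2k)}$, while the six $q$-power exponents $m(dm-d+2r)/2+(d-r)k$ accumulate to $3m(dm-d+2r)+6(d-r)k$. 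In parallel, $dm\equiv -r\pmod n$ forces $q^n\equiv 1\pmod{\Phi_n(q)}$ and hence $[2d(m-k)+r]\equiv -q^{-(2dk+r)}[2dk+r]\pmod{\Phi_n(q)}$, while the remaining explicit factor contributes $q^{(2d-2r-2)(m-2k)}$ when passing from $T_k$ to $T_{m-k}$.

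Assembling these contributions, I expect to reach a congruence of the shape $T_{m-k}\equiv -q^E T_k\pmod{\Phi_n(q)}$ with an exponent that simplifies to $E=3dm^2-dm+3mr-r$. The decisive check is that $E\equiv 0\pmod n$: substituting $dm\equiv -r$ yields $3dm^2\equiv -3mr$ and $-dm\equiv r$, so $E\equiv -3mr+r+3mr-r=0\pmod n$, and therefore $q^E\equiv 1\pmod{\Phi_n(q)}$, which closes the pairing argument. The principal obstacle I anticipate is precisely this exponent bookkeeping: one must track six simultaneous applications of Lemma \ref{Th1proofLemma1} (each with a different parameter $a$) alongside the contributions from the $q$-integer prefactor and the explicit power of $q$, and verify that all the arithmetic conspires to produce an exponent divisible by $n$.
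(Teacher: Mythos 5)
Your proposal is correct and follows essentially the same route as the paper: the paper's proof of Lemma \ref{Th1proofLemma2} is precisely the cancellation of the $k$-th and $(m-k)$-th summands modulo $\Phi_n(q)$ via Lemma \ref{Th1proofLemma1}, which you carry out with the exponent bookkeeping (the factor $q^{E}$ with $E=3dm^2-dm+3mr-r\equiv 0\pmod n$) made explicit rather than left implicit. The only cosmetic difference is that the paper first sets aside the trivial cases $n=1$ and $r=0$ before invoking the pairing, whereas you absorb the self-paired middle term by noting that $2$ is invertible modulo $\Phi_n(q)$.
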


\begin{proof} It is easy to see that Lemma \ref{Th1proofLemma2} is true  for $n=1$ or $r=0$. We now suppose that $n>1$ and $r\neq 0$. By Lemma \ref{Th1proofLemma1}, for $0\leq k\leq m,$ the $k$-th and $(m-k)$-th terms on the left-hand side of \eqref{super3} cancel each other modulo $\Phi_n(q)$, i.e.,
\begin{align*}
&[2d(m-k)+r]\frac{(q^r;q^d)_{m-k}(aq^r;q^d)_{m-k}(q^r/a;q^d)_{m-k}(bq^r;q^d)_{m-k}}{(aq^d;q^d)_{m-k}(q^d/a;q^d)_{m-k}(q^d/b;q^d)_{m-k}(q^d;q^d)_{m-k}} \notag\\
&\quad\times\frac{(q^r/b;q^d)_{m-k}(q^2;q^d)_{m-k}}{(bq^d;q^d)_{m-k}(q^{d+r-2};q^d)_{m-k}}q^{(2d-2r-2)(m-k)}\notag\\
&\quad\equiv-[2dk+r]\frac{(q^r;q^d)_k(aq^r;q^d)_k(q^r/a;q^d)_k(bq^r;q^d)_k}{(aq^d;q^d)_k(q^d/a;q^d)_k(q^d/b;q^d)_k(q^d;q^d)_k}\notag\\
&\quad\quad\times\frac{(q^r/b;q^d)_{k}(q^2;q^d)_{k}}{(bq^d;q^d)_{k}(q^{d+r-2};q^d)_{k}}q^{(2d-2r-2)k}\pmod{\Phi_n(q)}.
\end{align*}
This proves that the $q$-congruence \eqref{super3} is true modulo $\Phi_n(q)$.
\end{proof}

\begin{lemma}\label{Th1proofLemma3}
Let $n$ be a positive odd integer. Let $r=\pm1$. Then
\begin{align}
&\sum_{k=0}^{m}[8k+r]\frac{(q^r;q^4)_k^5(q^2;q^4)_k}{(q^4;q^4)_k^5(q^{2+r};q^d)_{k}}q^{(6-2r)k} \equiv 0\pmod{[n]},\label{super5}\\[5pt]
&\sum_{k=0}^{n-1}[8k+r]\frac{(q^r;q^4)_k^5(q^2;q^4)_k}{(q^4;q^4)_k^5(q^{2+r};q^d)_{k}}q^{(6-2r)k} \equiv 0\pmod{[n]},\label{super6}
\end{align}
where
\begin{align*}
&m=\begin{cases}(n-r)/4, &\text{if }n\equiv r\pmod{4},\\[5pt]
(3n-r)/4 ,&\text{if }n\equiv -r\pmod{4}.
\end{cases}
\end{align*}
\end{lemma}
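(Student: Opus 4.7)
The plan is to deduce Lemma~2.3 from Lemma~2.2 by strengthening the latter from $\Phi_n(q)$ to $[n]$ while retaining the auxiliary parameters $a$ and $b$ as generic transcendentals, and then specializing $a=b=1$. The key observation is that the pairing identity underlying Lemma~2.2 (namely Lemma~2.1) holds modulo every cyclotomic divisor $\Phi_{n'}(q)$ of $[n]$ whenever $a,b$ are generic, because the $a$- and $b$-dependent denominators remain invertible. Combined with the Chinese remainder theorem for coprime polynomials (via the factorization $[n]=\prod_{1<n'\mid n}\Phi_{n'}(q)$ with pairwise coprime factors), this upgrades Lemma~2.2 to hold modulo $[n]$.

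First I would verify that setting $d=4$ and $a=b=1$ in Lemma~2.2 reproduces the summand $T_k$ of Lemma~2.3 verbatim, introducing no vanishing denominators. Then, for each divisor $n'>1$ of $n$, the hypothesis $4m\equiv-r\pmod n$ implies $4m\equiv-r\pmod{n'}$, hence $q^{4m+r}\equiv 1\pmod{\Phi_{n'}(q)}$. This is the only congruence used in the proof of Lemma~2.1, and together with the invertibility (for generic $a,b$) of the Pochhammer factors $(q^4/a;q^4)_j$, $(aq^4;q^4)_j$, and their $b$-analogues, it shows that Lemma~2.1 holds modulo $\Phi_{n'}(q)$ even without its restriction $m\leq n'-1$. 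Consequently the pairing cancellation $T_{m-k}(a,b)+T_k(a,b)\equiv 0\pmod{\Phi_{n'}(q)}$ of Lemma~2.2 holds for each such $n'$, yielding $\sum_{k=0}^m T_k(a,b)\equiv 0\pmod{\Phi_{n'}(q)}$ and, by the CRT, $\sum_{k=0}^m T_k(a,b)\equiv 0\pmod{[n]}$. Specializing $a=b=1$ proves the first congruence of Lemma~2.3.

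For the second congruence (upper limit $n-1$), I would apply the same extended Lemma~2.2 with $m$ replaced by $m+n$ (which still satisfies $4(m+n)\equiv-r\pmod n$) to obtain $\sum_{k=0}^{m+n}T_k\equiv 0\pmod{[n]}$. Writing this as $\sum_{k=0}^{n-1}T_k+\sum_{j=0}^{m}T_{j+n}$, it suffices to show $\sum_{j=0}^m T_{j+n}\equiv 0\pmod{[n]}$. For each $n'\mid n$ with $n'>1$, one has the periodicity $T_{j+n}\equiv c_{n'}\,T_j\pmod{\Phi_{n'}(q)}$ with $c_{n'}=(q^r;q^4)_n^5(q^2;q^4)_n\, q^{(6-2r)n}/((q^4;q^4)_n^5(q^{r+2};q^4)_n)$ independent of $j$: this follows from the relations $(q^{r+4n};q^4)_j\equiv(q^r;q^4)_j$, $q^{(6-2r)n}\equiv 1$, and $[8(j+n)+r]\equiv[8j+r]\pmod{\Phi_{n'}(q)}$, all consequences of $q^{4n}\equiv 1\pmod{\Phi_{n'}(q)}$ when $n'\mid n$. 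A valuation count shows that both numerator and denominator of $c_{n'}$ have $\Phi_{n'}(q)$-adic valuation $6n/n'$, so $c_{n'}$ is a unit modulo $\Phi_{n'}(q)$. Hence $\sum_{j=0}^m T_{j+n}\equiv c_{n'}\sum_{j=0}^m T_j\equiv 0\pmod{\Phi_{n'}(q)}$, and CRT completes the argument.

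The main obstacle will be the careful enhancement of Lemma~2.1 to modulo $\Phi_{n'}(q)$ without its hypothesis $m\leq n'-1$: one must verify that the derivation of the symmetry $(aq^r;q^4)_m\equiv(-a)^m q^{-4m(m+1)/2}(q^4/a;q^4)_m\pmod{\Phi_{n'}(q)}$ goes through using only $q^{4m+r}\equiv 1$ and the generic invertibility of the $a$-dependent denominators, so that the pairing cancellation is a genuine polynomial identity rather than a formal $0/0$ identity. A secondary technical point is interpreting the multi-section congruence $T_{j+n}\equiv c_{n'}T_j\pmod{\Phi_{n'}(q)}$ as an equality in the appropriate localization, since $T_j$ and $T_{j+n}$ may have poles at primitive $n'$-th roots of unity whose residues must match for the stated proportionality to hold.
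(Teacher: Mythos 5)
Your key step --- dropping the restriction $m\leq n-1$ in Lemma \ref{Th1proofLemma1} so that the pairing cancellation of Lemma \ref{Th1proofLemma2} holds modulo $\Phi_{n'}(q)$ for \emph{every} divisor $n'>1$ of $n$ with the same upper limit $m$ --- does not work, and this is a genuine gap rather than a technicality. The generic invertibility argument only covers the $a$- and $b$-dependent $q$-factorials. The summand of Lemma \ref{Th1proofLemma2} also contains the parameter-free ratios $(q^r;q^4)_k(q^2;q^4)_k/\bigl((q^4;q^4)_k(q^{2+r};q^4)_k\bigr)$, and once $k$ or $m-k$ exceeds $n'$ these factors are genuinely divisible by $\Phi_{n'}(q)$, so the division steps in the proof of Lemma \ref{Th1proofLemma1} are no longer legitimate for them. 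Worse, the term-by-term pairing $T_k+T_{m-k}\equiv 0\pmod{\Phi_{n'}(q)}$ is actually \emph{false} across blocks of length $n'$: writing $k=ln'+k_1$, the value of $T_k$ at a primitive $n'$-th root of unity $\zeta$ picks up a block factor $\beta^l$ (the limit of a complete block of the parameter-free part), and $\beta\neq 1$ in general. Concretely, for $r=1$, $n=15$, $n'=3$, $m=11$, one computes $T_{9}\vert_{q=\zeta_3}=\frac{9\cdot21\cdot33}{12\cdot24\cdot36}\cdot\frac{6\cdot18\cdot30}{3\cdot15\cdot27}=\frac{77}{48}\neq 1$ (the generic $a,b$-factors contribute $1$ on complete blocks), whence $T_0+T_{11}\not\equiv 0\pmod{\Phi_3(q)}$ even though $T_0+T_{2}\equiv 0$. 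So the extended pairing cannot deliver $\sum_{k=0}^{m}T_k\equiv 0\pmod{\Phi_{n'}(q)}$, and your second congruence inherits the same problem since it invokes the extended lemma with $m$ replaced by $m+n$. (There is also the secondary issue you flag yourself: even granted a generic-$(a,b)$ congruence modulo $\Phi_{n'}(q)$, specializing $a=b=1$ is not automatic because for $k\geq n'$ the specialized denominators $(q^4;q^4)_k^5(q^{2+r};q^4)_k$ do acquire factors of $\Phi_{n'}(q)$.)

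The sums do vanish modulo each $\Phi_{n'}(q)$, but for a different reason, and this is how the paper argues: fix a divisor $s>1$ of $n$, let $\zeta$ be a primitive $s$-th root of unity, and use the multiplicativity $\lim_{q\to\zeta}c_q(ls+k)/c_q(ls)=c_\zeta(k)/[r]$ to split $\sum_{k=0}^{n-1}c_\zeta(k)$ and $\sum_{k=0}^{m}c_\zeta(k)$ into complete blocks of length $s$ plus one partial block; each block sum vanishes because the case $n\mapsto s$ of the congruences modulo $\Phi_s(q)$ (i.e.\ Lemma \ref{Th1proofLemma2} together with the vanishing of the tail terms $m_1<k\leq s-1$) gives $\sum_{k=0}^{m_1}c_\zeta(k)=\sum_{k=0}^{s-1}c_\zeta(k)=0$, where $4m_1\equiv-r\pmod s$. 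Your periodicity idea $T_{j+n}\equiv c_{n'}T_j$ is in the same spirit as this multiplicativity, but as it stands it rests on the unproved extension above; to repair the argument you would need to replace the reflection pairing by the block decomposition at roots of unity (or an equivalent limit argument), which is precisely the paper's proof.
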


\begin{proof}
By Lemma \ref{Th1proofLemma2}, the left-hand side of \eqref{super5} is congruent to $0$ modulo $\Phi_{n}(q)$. Since $4m\equiv-r \pmod{n}$,
the $q$-factorial $(q^r;q^4)_k$ has a factor of the form $1-q^{\alpha n}$ (it is congruent to $0$ modulo $\Phi_{n}(q)$) for $m<k\leq n-1$.
 Note that the polynomial $(q^4;q^4)_k^5(q^{2+r};q^4)_{k}$ does not contain the square of $\Phi_{n}(q)$, and so
$$\frac{(q^r;q^4)_k^5(q^2;q^4)_k}{(q^4;q^4)_k^5(q^{2+r};q^4)_{k}}\equiv 0\pmod{\Phi_{n}(q)}$$ for $m<k\leq n-1$.
Thus, the $k$-th summand in \eqref{super6} with $k$ satisfying $m<k\leq n-1$ is congruent to $0$ modulo $\Phi_{n}(q)$.
This together with \eqref{super5} modulo $\Phi_{n}(q)$ confirms the $q$-congruence \eqref{super6} modulo $\Phi_{n}(q)$.

We are now ready to prove \eqref{super5} and \eqref{super6} modulo $[n]$.
Let $\zeta\neq 1$ be a primitive root of unity of degree $s$ with $s|n$ and $s>1$. Let $c_q(k)$ be the $k$-th term on the left-hand side of \eqref{super5}, i.e.,
$$
c_q(k)=[8k+r]\frac{(q^r;q^4)_k^5(q^2;q^4)_k}{(q^4;q^4)_k^5(q^{2+r};q^d)_{k}}q^{(6-2r)k}.
$$
The $q$-congruences \eqref{super5} and \eqref{super6} modulo $\Phi_{n}(q)$ with $n\mapsto s$ indicate that
$$\sum_{k=0}^{m_1}c_\zeta(k)=\sum_{k=0}^{s-1}c_\zeta(k)=0,$$ where $4m_1\equiv -r\pmod{s}$ and $0\leq m_1 \leq s-1.$
Observing that
$$
\lim_{q\rightarrow \zeta}\frac{c_q(ls+k)}{c_q(ls)}=\frac{c_\zeta(k)}{[r]},
$$
we have
$$\sum_{k=0}^{n-1}c_\zeta(k)=\sum_{l=0}^{n/s-1}\sum_{k=0}^{s-1}c_\zeta(ls+k)=\frac{1}{[r]}\sum_{l=0}^{n/s-1}c_\zeta(ls)\sum_{k=0}^{s-1}c_\zeta(k)=0,$$
and
$$\sum_{k=0}^{m}c_\zeta(k)=\frac{1}{[r]}\sum_{l=0}^{(m-m_1)/s-1}c_\zeta(ls)\sum_{k=0}^{s-1}c_\zeta(k)+\frac{c_\zeta(m-m_1)}{[r]}\sum_{k=0}^{m_1}c_\zeta(k)=0,$$
which imply that both $\sum_{k=0}^{n-1}c_q(k)$ and $\sum_{k=0}^{m}c_q(k)$ are congruent to $0$ modulo $\Phi_s(q)$.
The proof then follows the fact that $\prod_{s\mid n,s>1}\Phi_{s}(q)=[n].$
\end{proof}

\section{Proof of Theorem \ref{Conject1}}
Now we display a parametric generalization of Theorem \ref{Conject1}.
\begin{theorem}\label{Th1}
Let $a,b $  be indeterminates. Let $n$ be a positive odd integer.  Then, modulo $\Phi_{n}(q)(1-aq^{tn})(a-q^{tn})(1-bq^{tn})(b-q^{tn})$,
\begin{align}
&\sum_{k=0}^{M}[8k+1]\frac{(aq,q/a,bq,q/b,q,q^2;q^4)_k}{(aq^4,q^4/a,bq^4,q^4/b,q^4,q^3;q^4)_k}q^{4k} \notag\\
&\quad\equiv [tn]\bigg(\frac{(1-bq^{tn})(b-q^{tn})(-1-a^2+aq^{tn})}{(a-b)(1-ab)}\cdot \frac{(bq^2,q^2/b;q^4)_{(tn-1)/4}}{(bq^4,q^4/b;q^4)_{(tn-1)/4}}\notag\\
&\quad\quad +\frac{(1-aq^{tn})(a-q^{tn})(-1-b^2+bq^{tn})}{(b-a)(1-ab)}\cdot \frac{(aq^2,q^2/a;q^4)_{(tn-1)/4}}{(aq^4,q^4/a;q^4)_{(tn-1)/4}}\bigg),\label{super11}
\end{align}
where
\begin{align*}
&M=\begin{cases}(n-1)/4, &\text{if }n\equiv 1\pmod{4},\\[5pt]
(3n-1)/4 ,&\text{if }n\equiv 3\pmod{4},
\end{cases}\ \text{and}\
t=\begin{cases}1, &\text{if }n\equiv 1\pmod{4},\\[5pt]
3 ,&\text{if }n\equiv 3\pmod{4}.
\end{cases}
\end{align*}
\end{theorem}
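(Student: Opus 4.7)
The plan is to follow the creative microscoping framework: I would prove \eqref{super11} modulo each of the five pairwise coprime polynomials $\Phi_n(q)$, $1-aq^{tn}$, $a-q^{tn}$, $1-bq^{tn}$, $b-q^{tn}$ separately, and then combine them via the Chinese remainder theorem in $\mathbb{Q}[a,b,q]$.

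For the cyclotomic factor $\Phi_n(q)$, I would invoke Lemma~\ref{Th1proofLemma2} with $d=4$, $r=1$, and $m=M$. The hypothesis $4M\equiv -1\pmod{n}$ is built into the definitions: one checks that $4M=tn-1$ in both parity cases. This gives that the left-hand side is $\equiv 0\pmod{\Phi_n(q)}$. The right-hand side also vanishes modulo $\Phi_n(q)$, because of the overall factor $[tn]=(1-q^{tn})/(1-q)$, which is $0$ at every primitive $n$-th root of unity.

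For the factor $a-q^{tn}$, I would specialize $a=q^{tn}$. This turns $(q/a;q^4)_k=(q^{1-tn};q^4)_k$ into a Pochhammer with a zero at $k=M+1$, so the sum truncates at $k=M$ and becomes a terminating very-well-poised ${}_8\phi_7$ series in base $q^4$. Applying Watson's transformation \eqref{Wasttran} with parameters chosen so that one numerator/denominator pair in the output ${}_4\phi_3$ cancels, I obtain a balanced ${}_3\phi_2$ which is summable by the $q$-Pfaff--Saalsch\"utz formula \eqref{q-Paff}. The resulting closed form should simplify to the first bracketed term on the right-hand side of \eqref{super11} at $a=q^{tn}$; the second bracketed term vanishes automatically there because of its prefactor $a-q^{tn}$. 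To dispatch the remaining three factors, I would observe that both sides of \eqref{super11} are invariant under $a\to 1/a$ (on the left this swaps the pairs $(aq;q^4)_k\leftrightarrow (q/a;q^4)_k$ and $(aq^4;q^4)_k\leftrightarrow (q^4/a;q^4)_k$; on the right a direct check gives the same symmetry) and symmetric under $a\leftrightarrow b$, so the congruences modulo $1-aq^{tn}$, $b-q^{tn}$, and $1-bq^{tn}$ all reduce to the $a-q^{tn}$ case. Pairwise coprimality of the five factors in $\mathbb{Q}[a,b,q]$ is immediate (distinct linear roots in $a$ or $b$, or independence of variables), so the polynomial Chinese remainder theorem assembles the five congruences into the claim.

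The principal obstacle will be the Watson-transformation step: one has to select the free parameters in \eqref{Wasttran} so that the argument evaluates to $q^4$ and one pair of parameters in the output ${}_4\phi_3$ collapses, and then massage the ratio of $q$-Pochhammers produced by \eqref{q-Paff} into the precise rational expression $(-1-a^2+aq^{tn})(bq^2,q^2/b;q^4)_M/((a-b)(1-ab)(bq^4,q^4/b;q^4)_M)$ predicted by \eqref{super11}. Once this identification is carried out, everything else is essentially bookkeeping.
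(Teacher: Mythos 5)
Your plan is correct and coincides with the paper's own argument: specialize $a=q^{\pm tn}$ so that Watson's transformation \eqref{Wasttran} followed by the $q$-Pfaff--Saalsch\"utz formula \eqref{q-Paff} evaluates the sum, use the $a\leftrightarrow b$ (and $a\to 1/a$) symmetry to cover the remaining linear factors, verify that the stated right-hand side reduces to these residues (the paper's relations \eqref{section222}--\eqref{section223}), handle $\Phi_n(q)$ via Lemma \ref{Th1proofLemma2} together with $[tn]\equiv 0\pmod{\Phi_n(q)}$, and assemble everything with the Chinese remainder theorem for coprime polynomials. The only difference is organizational (five separate moduli versus the paper's pairing of $(1-aq^{tn})(a-q^{tn})$ and $(1-bq^{tn})(b-q^{tn})$), which is immaterial.
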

\begin{proof}
Firstly, letting $q\rightarrow q^4$ and taking $a=q, b=q^{1-tn}, c=q^{1+tn}, d= bq, e=q/b, f=q^{2}$ in \eqref{Wasttran}, we obtain
\begin{align*}
&\sum_{k=0}^{M}[8k+1]\frac{(q^{1-tn},q^{1+tn},bq,q/b,q^{2},q;q^4)_k}{(q^{4+tn},q^{4-tn},bq^4,q^4/b,q^{3},q^4;q^4)_k}q^{4k} \\
&\quad=[tn]\frac{(q,q^{3};q^4)_{(tn-1)/4}}{(q^4/b,bq^{4};q^4)_{(tn-1)/4}}\sum_{k=0}^{M}\frac{(bq,q/b,q^{1-tn};q^4)_kq^{4k}}{(q^{4-tn},q^{3},q^4;q^4)_k}.
\end{align*}
Furthermore, replacing $a=bq, b=q/b, m=(tn-1)/4, c=q^{3}$ in \eqref{q-Paff}, we have
\begin{align*}
&\sum_{k=0}^{M}[8k+1]\frac{(q^{1-tn},q^{1+tn},bq,q/b,q^{2},q;q^4)_k}{(q^{4+tn},q^{4-tn},bq^4,q^4/b,q^{3},q^4;q^4)_k}q^{4k}
=[tn]\frac{(bq^2,q^2/b;q^4)_{(tn-1)/4}}{(bq^4,q^4/b;q^4)_{(tn-1)/4}}.
\end{align*}
Namely,
\begin{align}\label{section21}
&\sum_{k=0}^{M}[8k+1]\frac{(aq,q/a,bq,q/b,q,q^2;q^4)_k}{(aq^4,q^4/a,bq^4,q^4/b,q^4,q^3;q^4)_k}q^{4k} \notag\\
&\quad\equiv [tn]\frac{(bq^2,q^2/b;q^4)_{(tn-1)/4}}{(bq^4,q^4/b;q^4)_{(tn-1)/4}}\pmod{(1-aq^{tn})(a-q^{tn})}.
\end{align}

Secondly, interchanging the parameters $a$ and $b$ in \eqref{section21}, we get the formula:
\begin{align*}
&\sum_{k=0}^{M}[8k+1]\frac{(aq,q/a,bq,q/b,q,q^2;q^4)_k}{(aq^4,q^4/a,bq^4,q^4/b,q^4,q^3;q^4)_k}q^{4k} \notag\\
&\quad\equiv [tn]\frac{(aq^2,q^2/a;q^4)_{(tn-1)/4}}{(aq^4,q^4/a;q^4)_{(tn-1)/4}}\pmod{(1-bq^{tn})(b-q^{tn})}.
\end{align*}

Thirdly, it is clear that the polynomial $(1-aq^{tn})(a-q^{tn})$ is coprime with the polynomial $(1-bq^{tn})(b-q^{tn})$. Noticing the relations:
\begin{align}
&\frac{(1-bq^{tn})(b-q^{tn})(-1-a^2+aq^{tn})}{(a-b)(1-ab)}\equiv1\pmod{(1-aq^{tn})(a-q^{tn})},\label{section222}\\
&\frac{(1-aq^{tn})(a-q^{tn})(-1-b^2+bq^{tn})}{(b-a)(1-ba)}\equiv1\pmod{(1-bq^{tn})(b-q^{tn})},\label{section223}
\end{align}
and applying the Chinese remainder theorem for coprime polynomials, we are led to the following $q$-congruence: modulo $(1-aq^{tn})(a-q^{tn})(1-bq^{tn})(b-q^{tn})$,
\begin{align*}
&\sum_{k=0}^{M}[8k+1]\frac{(aq,q/a,bq,q/b,q,q^2;q^4)_k}{(aq^4,q^4/a,bq^4,q^4/b,q^4,q^3;q^4)_k}q^{4k} \notag\\
&\quad\equiv[tn]\bigg(\frac{(1-bq^{tn})(b-q^{tn})(-1-a^2+aq^{tn})}{(a-b)(1-ab)}\cdot\frac{(bq^2,q^2/b;q^4)_{(tn-1)/4}}{(bq^4,q^4/b;q^4)_{(tn-1)/4}}\notag\\
&\quad\quad+\frac{(1-aq^{tn})(a-q^{tn})(-1-b^2+bq^{tn})}{(b-a)(1-ba)}\cdot\frac{(aq^2,q^2/a;q^4)_{(tn-1)/4}}{(aq^4,q^4/a;q^4)_{(tn-1)/4}}\bigg).
\end{align*}

Finally, by Lemma \ref{Th1proofLemma2}, the left-hand side of \eqref{super11} is congruent to $0$ modulo $\Phi_{n}(q)$. Moreover, $[tn]$ is also congruent to $0$ modulo $\Phi_n(q)$, and therefore \eqref{super11} also holds modulo $\Phi_n(q)$. Since $\Phi_n(q)$ and $(1-aq^{tn})(a-q^{tn})(b-q^{tn})(1-bq^{tn})$ are relatively prime polynomials, we can prove the $q$-supercongruence \eqref{super11} is true modulo $\Phi_{n}(q)(1-aq^{tn})(a-q^{tn})(1-bq^{tn})(b-q^{tn})$.
 This completes the proof of the theorem.
\end{proof}

Now we can prove Theorem \ref{Conject1}.
\begin{proof}[Proof of Theorems \ref{Conject1}]
Since the denominator of the reduced form of the $k$-th summand
$$\frac{(q,q,bq,q/b,q,q^2;q^4)_k}{(q^4,q^4,bq^4,q^4/b,q^4,q^3;q^4)_k}$$
does not contain the square of $\Phi_{n}(q)$, the $a\rightarrow 1$ case of Theorem \ref{Th1} reduces to
\begin{align}\label{section24}
&\sum_{k=0}^{M}[8k+1]\frac{(q,q,bq,q/b,q,q^2;q^4)_k}{(q^4,q^4,bq^4,q^4/b,q^4,q^3;q^4)_k}q^{4k} \notag\\
&\quad\equiv\mu(b,tn)\pmod{\Phi_n(q)^2(1-bq^{tn})(b-q^{tn})},
\end{align}
where
\begin{align*}
\mu(b,tn)&=[tn]\bigg(\frac{(1-bq^{tn})(b-q^{tn})(-2+q^{tn})}{(1-b)^2}\cdot\frac{(bq^2,q^2/b;q^4)_{(tn-1)/4}}{(bq^4,q^4/b;q^4)_{(tn-1)/4}}\notag\\
&\quad-\frac{(1-q^{tn})^2(-1-b^2+bq^{tn})}{(b-1)^2}\cdot\frac{(q^2;q^4)_{(tn-1)/4}^2}{(q^4;q^4)_{(tn-1)/4}^2}\bigg).
\end{align*}
Letting $b\to 1$ in \eqref{section24} and applying the L'H\^{o}spital rule, we get
\begin{align}
&\sum_{k=0}^{M}[8k+1]\frac{(q;q^4)_k^6(q^2;q^2)_{2k}}{(q^4;q^4)_k^6(q;q^2)_{2k}}q^{4k}\notag\\
&\quad=\sum_{k=0}^{M}[8k+1]\frac{(q;q^4)_k^5(q^2;q^4)_{k}}{(q^4;q^4)_k^5(q^3;q^4)_{k}}q^{4k}\notag\\
&\quad\equiv [tn]\frac{(q^2;q^4)_{(tn-1)/4}^2}{(q^4;q^4)_{(tn-1)/4}^2}\Bigg[1+(1-q^{tn})^2(q^{tn}-2)\notag\\
&\quad\quad\times\Bigg(\sum_{j=1}^{(tn-1)/4}\frac{q^{4j}}{(1-q^{4j})^2}-\sum_{j=0}^{(tn-1)/4-1}\frac{q^{4j+2}}{(1-q^{4j+2})^2}\Bigg)\Bigg]\pmod{\Phi_{n}(q)^3}.  \label{eq:added}
\end{align}

For $n\equiv 1\pmod{4}$, we have $t=1$ and $M=(n-1)/4$. From the above $q$-supercongruence we can deduce that
\begin{align}\label{section27}
&\sum_{k=0}^{(n-1)/4}[8k+1]\frac{(q;q^4)_k^6(q^2;q^2)_{2k}}{(q^4;q^4)_k^6(q;q^2)_{2k}}q^{4k}
\equiv [n]\frac{(q^2;q^4)_{(n-1)/4}^2}{(q^4;q^4)_{(n-1)/4}^2}\pmod{\Phi_{n}(q)^3},
\end{align}
where we have used the property
$$
(1-q^{n})^2\Bigg(\sum_{j=1}^{(n-1)/4}\frac{q^{4j}}{(1-q^{4j})^2}-\sum_{j=0}^{(n-1)/4-1}\frac{q^{4j+2}}{(1-q^{4j+2})^2}\Bigg)\equiv0\pmod{\Phi_{n}(q)^2}.
$$
Moreover, since $$\frac{(q;q^4)_k^6(q^2;q^2)_{2k}}{(q^4;q^4)_k^6(q;q^2)_{2k}}q^{4k}=\frac{(q;q^4)_k^5(q^2;q^4)_{k}}{(q^4;q^4)_k^5(q^3;q^4)_{k}}q^{4k}\equiv0\pmod{\Phi_n(q)^3}$$
for $k$ in the range $(n-1)/4\leq k\leq n-1$, we see that \eqref{section27} can also be written as
\begin{align}\label{section28}
&\sum_{k=0}^{n-1}[8k+1]\frac{(q;q^4)_k^6(q^2;q^2)_{2k}}{(q^4;q^4)_k^6(q;q^2)_{2k}}q^{4k}
\equiv [n]\frac{(q^2;q^4)_{(n-1)/4}^2}{(q^4;q^4)_{(n-1)/4}^2}\pmod{\Phi_{n}(q)^3}.
\end{align}

For $n\equiv 3\pmod{4}$, we have $t=3$ and $M=(3n-1)/4$. Since
the denominator of the reduced form of
$$
(1-q^{3n})^2\Bigg(\sum_{j=1}^{(3n-1)/4}\frac{q^{4j}}{(1-q^{4j})^2}-\sum_{j=0}^{(3n-1)/4-1}\frac{q^{4j+2}}{(1-q^{4j+2})^2}\Bigg)
$$ is coprime with $\Phi_{n}(q)$, and
$$
[3n]\frac{(q^2;q^4)_{(3n-1)/4}^2}{(q^4;q^4)_{(3n-1)/4}^2}\equiv0\pmod{\Phi_{n}(q)^3},
$$  we deduce from \eqref{eq:added} that
\begin{align*}
&\sum_{k=0}^{(3n-1)/4}[8k+1]\frac{(q;q^4)_k^6(q^2;q^2)_{2k}}{(q^4;q^4)_k^6(q;q^2)_{2k}}q^{4k}
\equiv 0\pmod{\Phi_{n}(q)^3}.
\end{align*}

Moreover, since
$$
\frac{(q;q^4)_k^6(q^2;q^2)_{2k}}{(q^4;q^4)_k^6(q;q^2)_{2k}}q^{4k}=\frac{(q;q^4)_k^5(q^2;q^4)_{k}}{(q^4;q^4)_k^5(q^3;q^4)_{k}}q^{4k}\equiv0\pmod{\Phi_n(q)^3}
$$
for  $(3n-1)/4\leq k\leq n-1$, we conclude that
\begin{align}\label{section30}
&\sum_{k=0}^{n-1}[8k+1]\frac{(q;q^4)_k^6(q^2;q^2)_{2k}}{(q^4;q^4)_k^6(q;q^2)_{2k}}q^{4k}\equiv 0\pmod{\Phi_{n}(q)^3}.
\end{align}
Finally, combining \eqref{section28}, \eqref{section30}, \eqref{super6} and ${\rm lcm}(\Phi_{n}(q)^3,[n])=[n]\Phi_{n}(q)^2$, we immediately obtain \eqref{more5}.
\end{proof}

\section{Proof of Theorem \ref{Conject2}}
We first give a parameter generalization of Theorem \ref{Conject2}.
\begin{theorem}\label{Th2section4}
Let $n>3$ be an odd integer. Let $a,b $  be indeterminates. Then, modulo $\Phi_{n}(q)(1-aq^{tn})(a-q^{tn})(1-bq^{tn})(b-q^{tn})$,
\begin{align}
&\sum_{k=0}^{M}[8k-1]\frac{(q^{-1},aq^{-1},q^{-1}/a,bq^{-1},q^{-1}/b,q^2;q^4)_k}{(aq^4,q^4/a,bq^4,q^4/b,q^4,q;q^4)_k}q^{8k} \notag\\
&\quad\equiv q^{-1-tn}[tn][tn+2]\notag\\
&\quad\quad\times\bigg(\frac{(1-bq^{tn})(b-q^{tn})(-1-a^2+aq^{tn})}{(a-b)(1-ab)}\frac{(bq^2,q^2/b;q^4)_{(tn+1)/4}}{(bq^4,q^4/b;q^4)_{(tn+1)/4}} T(tn,b,q)\notag\\
&\quad\quad +\frac{(1-aq^{tn})(a-q^{tn})(-1-b^2+bq^{tn})}{(b-a)(1-ab)}\frac{(aq^2,q^2/a;q^4)_{(tn+1)/4}}{(aq^4,q^4/a;q^4)_{(tn+1)/4}}T(tn,a,q)\bigg),\label{section41}
\end{align}
where
\begin{align*}
&M=\begin{cases}(3n+1)/4, &\text{if }n\equiv 1\pmod{4},\\[5pt]
(n+1)/4 ,&\text{if }n\equiv 3\pmod{4},
\end{cases}\quad
t=\begin{cases}3, &\text{if }n\equiv 1\pmod{4},\\[5pt]
1 ,&\text{if }n\equiv 3\pmod{4},
\end{cases}
\end{align*}
 and
\begin{align} \label{section412}
T(tn,b,q)&=\frac{(1-q)(q^{-2}-q^{-1-tn})}{(1-q^{-2-tn})(q^{-2}-q^{1-tn})}\notag\\
&\quad+\frac{(q^{-tn}-q^{-2-tn})(q^{-2}-q^{-tn})(bq^{-1}-q)(q^{-1}/b-q)}{(1-q^{-2-tn})(bq^{-1}-q^{-tn})(q^{-1}/b-q^{-tn})(q^{-2}-q^{1-tn})}.
\end{align}
\end{theorem}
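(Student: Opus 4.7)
The plan is to mirror the four-step program used to prove Theorem \ref{Th1}: specialize the parameter $a$ to a root of $(1-aq^{tn})(a-q^{tn})$, evaluate the resulting finite sum via Watson's transformation together with additional basic hypergeometric identities, invoke the $a\leftrightarrow b$ symmetry, glue via the Chinese remainder theorem for coprime polynomials, and finally lift the modulus using Lemma \ref{Th1proofLemma2}.

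For the evaluation step, I would apply \eqref{Wasttran} with $q\mapsto q^{4}$ and base parameter $q^{-1}$, so that the very-well-poised factor reproduces $[8k-1]$ and the Pochhammer $(q^{-1};q^{4})_{k}$. Matching the remaining five Watson parameters to $\{q^{tn-1},\,q^{2},\,bq^{-1},\,q^{-1}/b\}$ together with the terminating parameter $q^{-m}=q^{-1}/a$, which becomes $q^{-tn-1}$ after the specialization $a\mapsto q^{tn}$ and forces $m=M=(tn+1)/4$, a direct parameter count verifies that every factor of the summand of \eqref{section41} is reproduced and that Watson's argument simplifies to $q^{8}$. The specialized left-hand side is then rewritten as
\[
\frac{(q^{3},q^{5};q^{4})_{M}}{(bq^{4},q^{4}/b;q^{4})_{M}}\,{}_{4}\phi_{3}\!\left[\begin{matrix} q^{2-tn},\,bq^{-1},\,q^{-1}/b,\,q^{-tn-1}\\ q^{4-tn},\,q,\,q^{-tn-2}\end{matrix};\,q^{4},\,q^{4}\right].
\]
In contrast with the analogous series from the proof of Theorem \ref{Th1}, no upper parameter of this balanced ${}_{4}\phi_{3}$ coincides with any lower parameter (the closest being $q^{2-tn}$ upper versus $q^{-tn-2}$ lower, which differ by the single factor $q^{4}$), so the series does not collapse to a Pfaff--Saalsch\"{u}tz ${}_{3}\phi_{2}$ in one step. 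My plan is to resolve this by splitting the sum into two pieces, each evaluable by \eqref{q-Paff} --- for instance via the ratio identity $(q^{2-tn};q^{4})_{k}/(q^{-tn-2};q^{4})_{k}=1+q^{-tn-2}(1-q^{4k})/(1-q^{-tn-2})$ followed by absorption of $(1-q^{4k})$ into $(q^{4};q^{4})_{k}$ and a reindexing $k\mapsto k+1$, or, equivalently, via a Sears-type contiguous relation for balanced ${}_{4}\phi_{3}$ series. Combining the two Pfaff--Saalsch\"{u}tz evaluations and simplifying reproduces the right-hand side of \eqref{section41} modulo $(1-aq^{tn})(a-q^{tn})$, with the two summands of $T(tn,b,q)$ in \eqref{section412} identified term-by-term with the two \eqref{q-Paff} contributions.

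The remaining steps directly transcribe the argument in the proof of Theorem \ref{Th1}. Interchanging $a$ and $b$ (which preserves the left-hand side) yields the companion congruence modulo $(1-bq^{tn})(b-q^{tn})$. Since the two conductors are coprime as polynomials and the rational prefactors
\[
\frac{(1-bq^{tn})(b-q^{tn})(-1-a^{2}+aq^{tn})}{(a-b)(1-ab)},\qquad \frac{(1-aq^{tn})(a-q^{tn})(-1-b^{2}+bq^{tn})}{(b-a)(1-ab)}
\]
are each $\equiv 1$ modulo the corresponding conductor, the Chinese remainder theorem glues the two congruences into a single one modulo $(1-aq^{tn})(a-q^{tn})(1-bq^{tn})(b-q^{tn})$. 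Finally, Lemma \ref{Th1proofLemma2} applied with $d=4$ and $r=-1$ (noting that $4M\equiv 1\pmod{n}$ since $4M=tn+1$ and $\gcd(n,t)=1$) shows that the left-hand side of \eqref{section41} vanishes modulo $\Phi_{n}(q)$, while $[tn]$ on the right is likewise divisible by $\Phi_{n}(q)$. Since $\Phi_{n}(q)$ is coprime to the quartic product above, the modulus lifts to $\Phi_{n}(q)(1-aq^{tn})(a-q^{tn})(1-bq^{tn})(b-q^{tn})$, completing the proof.

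The main obstacle is the ${}_{4}\phi_{3}$ evaluation in the second paragraph: the two-summand structure of $T(tn,b,q)$ reflects precisely the fact that Watson's transformation produces a balanced ${}_{4}\phi_{3}$ lying just outside the Pfaff--Saalsch\"{u}tz regime, and identifying the sum of the two \eqref{q-Paff} evaluations with \eqref{section412} requires careful algebraic bookkeeping of shifted Pochhammers. All other steps follow the template of the proof of Theorem \ref{Th1} verbatim.
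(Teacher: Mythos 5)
Your overall architecture coincides with the paper's: specialize $a=q^{\pm tn}$, apply Watson's transformation \eqref{Wasttran} with base $q^4$ and $m=M=(tn+1)/4$, interchange $a\leftrightarrow b$, glue the two congruences with the Chinese remainder theorem via \eqref{section222}--\eqref{section223}, and lift the modulus by Lemma \ref{Th1proofLemma2} together with the coprimality of $\Phi_n(q)$ and the quartic product; all of that, including your identification of the resulting balanced ${}_4\phi_3$, matches the paper and is sound. The genuine gap is the step you yourself flag as the main obstacle: the evaluation of that ${}_4\phi_3$. The paper does not derive it; it quotes the two-term evaluation of Guo and Wei \cite[(2.5)]{GC21}, which directly gives that the ${}_4\phi_3$ equals $q^{-tn-1}T(tn,b,q)\,(bq^2,q^2/b;q^4)_{(tn+1)/4}/(q^{-1},q;q^4)_{(tn+1)/4}$ and hence \eqref{section413}. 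Your substitute derivation fails as stated: writing $(q^{2-tn};q^4)_k/(q^{-tn-2};q^4)_k=(1-q^{-tn-2+4k})/(1-q^{-tn-2})$ and splitting $1-q^{-tn-2+4k}=(1-q^{-tn-2})+q^{-tn-2}(1-q^{4k})$ produces, besides the reindexed piece (which is indeed balanced), the series
\begin{align*}
\sum_{k\ge 0}\frac{(bq^{-1},q^{-1}/b,q^{-1-tn};q^4)_k}{(q^4,q^{4-tn},q;q^4)_k}\,q^{4k},
\end{align*}
whose lower-parameter product is $q^{8}$ (not $q^{4}$) times its upper-parameter product; it is not Saalsch\"{u}tzian, so \eqref{q-Paff} does not apply to it and ``combining the two Pfaff--Saalsch\"{u}tz evaluations'' is not available.

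The idea is repairable, but not in the form you wrote: one should instead decompose $1-q^{-tn-2+4k}$ as a linear combination of $1-bq^{-1}q^{4k}$ and $1-q^{4k}$ (the contiguous relation that raises the upper parameter $bq^{-1}$ to $bq^{3}$ rather than cancelling the upper/lower pair outright); then both resulting terminating ${}_3\phi_2$'s are balanced, \eqref{q-Paff} applies to each, and one is left with the nontrivial bookkeeping needed to match the sum of the two closed forms with \eqref{section412} --- a verification you assert but do not carry out. Alternatively, simply cite \cite[(2.5)]{GC21} as the paper does. A minor further slip: after the specialization the left-hand side carries the factor $[-1]$ in front of the Watson prefactor (equivalently $[tn](q^{-1},q^5;q^4)_{M}$ in the paper's normalization), which your displayed prefactor $(q^3,q^5;q^4)_M/(bq^4,q^4/b;q^4)_M$ omits.
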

\begin{proof}
Firstly, we shall prove the following result: modulo $(1-aq^{tn})(a-q^{tn})$,
\begin{align}\label{section413}
&\sum_{k=0}^{M}[8k-1]\frac{(q^{-1},aq^{-1},q^{-1}/a,bq^{-1},q^{-1}/b,q^2;q^4)_k}{(aq^4,q^4/a,bq^4,q^4/b,q^4,q;q^4)_k}q^{8k} \notag\\
&\quad\equiv q^{-1-tn}[tn][tn+2]T(tn,b,q)\frac{(bq^2,q^2/b;q^4)_{(tn+1)/4}}{(bq^4,q^4/b;q^4)_{(tn+1)/4}}.
\end{align}
For $a=q^{tn}$ or $a=q^{-tn}$, by the Watson's ${}_8\phi_7$ transformation formula \eqref{Wasttran},
the left-hand side of \eqref{section413} is equal to
\begin{align}\label{section42}
&\sum_{k=0}^{M}[8k-1]\frac{(q^{-1},q^{-1-tn},q^{-1+tn},bq^{-1},q^{-1}/b,q^2;q^4)_k}{(q^{4+tn},q^{4-tn},bq^4,q^4/b,q^4,q;q^4)_k}q^{8k} \notag\\
&\quad=[-1]\cdot{}_8\phi_7\bigg[\begin{matrix}q^{-1}, &q^{\frac{7}{2}},&-q^{\frac{7}{2}},&bq^{-1},&q^{-1}/b,&q^{-1+tn},&q^{-1-tn},&q^{2}\\ &q^{-\frac{1}{2}},&-q^{-\frac{1}{2}},&q^{4-tn},&q^{4+tn},&bq^4,&q^4/b,&q\end{matrix};\ q^4,\ q^{8}\bigg]\notag\\
&\quad=[tn]\frac{(q^{-1},q^5;q^4)_{(tn+1)/4}}{(bq^4,q^4/b;q^4)_{(tn+1)/4}}{}_4\phi_3\bigg[\begin{matrix}q^{2-tn}, &bq^{-1},&q^{-1}/b,&q^{-1-tn}\\ &q^{4-tn},&q,&q^{-2-tn}\end{matrix};\ q^4,\ q^{4}\bigg].
\end{align}
Moreover, letting $q\rightarrow q^4, a\rightarrow bq^{-1}, b\rightarrow q^{-1}/b, c\rightarrow q^{-tn}, x\rightarrow q^{-2-tn} ,m=(tn+1)/4$ in the formula (cf. \cite[(2.5)]{GC21}),
we see that the $_4\phi_3$ summation on the right-hand side of \eqref{section42} is equal to
\begin{align*}
q^{-tn-1}T(tn,b,q)\frac{(bq^2,q^2/b;q^4)_{(tn+1)/4}}{(q^{-1},q;q^4)_{(tn+1)/4}}.
\end{align*}
Namely, the identity \eqref{section42} may be simplified as
\begin{align*}
&\sum_{k=0}^{M}[8k-1]\frac{(q^{-1},q^{-1-tn},q^{-1+tn},bq^{-1},q^{-1}/b,q^2;q^4)_k}{(q^{4+tn},q^{4-tn},bq^4,q^4/b,q^4,q;q^4)_k}q^{8k} \notag\\
&\quad=q^{-tn-1}[tn][tn+2]T(tn,b,q)\frac{(bq^2,q^2/b;q^4)_{(tn+1)/4}}{(bq^4,q^4/b;q^4)_{(tn+1)/4}}.
\end{align*}
This proves that the $q$-congruence \eqref{section413} holds.

Secondly, interchanging the parameters $a$ and $b$ in \eqref{section413}, we get the $q$-congruence:
modulo $(1-bq^{tn})(b-q^{tn})$,
\begin{align*}
&\sum_{k=0}^{M}[8k-1]\frac{(q^{-1},aq^{-1},q^{-1}/a,bq^{-1},q^{-1}/b,q^2;q^4)_k}{(aq^4,q^4/a,bq^4,q^4/b,q^4,q;q^4)_k}q^{8k} \notag\\
&\quad\equiv q^{-tn-1}[tn][tn+2]T(tn,a,q)\frac{(aq^2,q^2/a;q^4)_{(tn+1)/4}}{(aq^4,q^4/a;q^4)_{(tn+1)/4}}.
\end{align*}

Finally, similarly as before, employing Lemma \ref{Th1proofLemma2}, the $q$-congruences \eqref{section222}, \eqref{section223} and the Chinese remainder theorem for coprime polynomials, we obtain the $q$-congruence \eqref{section41}.
\end{proof}
\begin{proof}[Proof of Theorem \ref{Conject2} ]
As we have already mentioned in the proof of Theorem \ref{Conject1}, the denominator of the reduced form of the $k$-th summand
$$\frac{(q^{-1},q^{-1},q^{-1},bq^{-1},q^{-1}/b,q^2;q^4)_k}{(q^4,q^4,bq^4,q^4/b,q^4,q;q^4)_k}q^{8k}$$
does not contain the square of $\Phi_{n}(q)$. Letting $a\rightarrow1$ in Theorem \ref{Th2section4}, we conclude that, modulo $\Phi_{n}(q)^2(1-bq^{tn})(b-q^{tn})$,
\begin{align*}
&\sum_{k=0}^{M}[8k-1]\frac{(q^{-1},q^{-1},q^{-1},bq^{-1},q^{-1}/b,q^2;q^4)_k}{(q^{4},q^{4},bq^4,q^4/b,q^4,q;q^4)_k}q^{8k} \notag\\
&\quad\equiv q^{-1-tn}[tn][tn+2]\nu(tn,b,q),
\end{align*}
where
\begin{align*}
\nu(tn,b,q)&=\frac{(1-bq^{tn})(b-q^{tn})(-2+q^{tn})}{(1-b)^2}\cdot \frac{(bq^2,q^2/b;q^4)_{(tn+1)/4}}{(bq^4,q^4/b;q^4)_{(tn+1)/4}}T(tn,b,q)\notag\\
&\quad -\frac{(1-q^{tn})^2(-1-b^2+bq^{tn})}{(1-b)^2}\cdot \frac{(q^2;q^4)_{(tn+1)/4}^2}{(q^4;q^4)_{(tn+1)/4}^2}T(tn,1,q).
\end{align*}
By the L'H\^{o}spital rule, we are led to
\begin{align*}
&\lim_{b\rightarrow 1}\nu(tn,b,q)\\
&\quad=\frac{(q^2;q^4)_{(tn+1)/4}^2}{(q^4;q^4)_{(tn+1)/4}^2}\Bigg[(1-q^{tn})^2(q^{tn}-2)T(tn,1,q)\\
&\quad\quad\times\Bigg(\sum_{j=1}^{(tn+1)/4}\frac{q^{4j}}{(1-q^{4j})^2}-\sum_{j=0}^{(tn+1)/4-1}\frac{q^{4j+2}}{(1-q^{4j+2})^2}\Bigg)\\
&\quad\quad-q^{tn}(q^{tn}-2)T(tn,1,q)+(1-q^{tn})^2(q^{tn}-2)\cdot\frac{\partial T(tn,b,q)}{\partial b}\bigg|_{b=1}\\
&\quad+\frac{1}{2}(1-q^{tn})^2(q^{tn}-2)\cdot\frac{\partial^2 T(tn,b,q)}{\partial b^2}\bigg|_{b=1}+(1-q^{tn})^2T(tn,1,q)\Bigg],
\end{align*}
where $T(tn,b,q)$ is given by \eqref{section412}.

Furthermore, it is easy to check that both the denominators of $\frac{\partial T(tn,b,q)}{\partial b}\bigg|_{b=1}$ and $\frac{\partial^2 T(tn,b,q)}{\partial b^2}\bigg|_{b=1}$ are not divisible by $\Phi_{n}(q)$. Hence, modulo $\Phi_{n}(q)^3$, we have
\begin{align*}
&\sum_{k=0}^{M}[8k-1]\frac{(q^{-1};q^4)_k^6(q^2;q^2)_{2k}}{(q^4;q^4)_k^6(q^{-1};q^2)_{2k}}q^{8k}\notag\\
&\quad=\sum_{k=0}^{M}[8k-1]\frac{(q^{-1};q^4)_k^5(q^2;q^4)_{k}}{(q^4;q^4)_k^5(q;q^4)_{k}}q^{8k}\notag\\
&\quad\equiv q^{-1-tn}[tn][tn+2]\frac{(q^2;q^4)_{(tn+1)/4}^2}{(q^4;q^4)_{(tn+1)/4}^2}\Bigg[(1-q^{tn})^2(q^{tn}-2)T(tn,1,q)\\
&\quad\quad\times\Bigg(\sum_{j=1}^{(tn+1)/4}\frac{q^{4j}}{(1-q^{4j})^2}-\sum_{j=0}^{(tn+1)/4-1}\frac{q^{4j+2}}{(1-q^{4j+2})^2}\Bigg)-q^{tn}(q^{tn}-2)T(tn,1,q)\Bigg].
\end{align*}

For $n\equiv1\pmod {4}$, we have $t=3$ and $M=(3n+1)/4$. Since the expression $(q^2;q^4)_{(3n+1)/4}^2/(q^4;q^4)_{(3n+1)/4}^2$ is congruent to $0$ modulo $\Phi_{n}(q)^2$,
and the denominator
of the reduced form of the fraction
\begin{align*}
&(1-q^{3n})^2(q^{3n}-2)T(3n,1,q)\Bigg(\sum_{j=1}^{(3n+1)/4}\frac{q^{4j}}{(1-q^{4j})^2}-\sum_{j=0}^{(3n+1)/4-1}\frac{q^{4j+2}}{(1-q^{4j+2})^2}\Bigg)
\end{align*}
 is relatively prime to $\Phi_{n}(q)$,
we immediately get
\begin{align*}
&\sum_{k=0}^{(3n+1)/4}[8k-1]\frac{(q^{-1};q^4)_k^6(q^2;q^2)_{2k}}{(q^4;q^4)_k^6(q^{-1};q^2)_{2k}}q^{8k}
\equiv0\pmod{\Phi_{n}(q)^3}.
\end{align*}
Moreover, noticing that $$\frac{(q^{-1};q^4)_k^6(q^2;q^2)_{2k}}{(q^4;q^4)_k^6(q^{-1};q^2)_{2k}}\equiv0\pmod{\Phi_n(q)^3}$$
for $(3n+1)/4\leq k\leq n-1$, we conclude that
\begin{align}
&\sum_{k=0}^{n-1}[8k-1]\frac{(q^{-1};q^4)_k^6(q^2;q^2)_{2k}}{(q^4;q^4)_k^6(q^{-1};q^2)_{2k}}q^{8k}
\equiv 0\pmod{\Phi_{n}(q)^3}.\label{section500}
\end{align}

For $n\equiv 3\pmod {4}$, we have $t=1$ and $M=(n+1)/4$. Similarly as before, since $[n]\equiv0\pmod{\Phi_n(q)}$ for $n>1$ and $(q^2;q^4)_{(n+1)/4}$ is not divisible by $\Phi_{n}(q)$, we obtain
\begin{align*}
&\sum_{k=0}^{(n+1)/4}[8k-1]\frac{(q^{-1};q^4)_k^6(q^2;q^2)_{2k}}{(q^4;q^4)_k^6(q^{-1};q^2)_{2k}}q^{8k}\notag\\
&\quad\equiv -q^{-1}(q^{n}-2)[n][n+2]T(n,1,q)\frac{(q^2;q^4)_{(n+1)/4}^2}{(q^4;q^4)_{(n+1)/4}^2}\notag\\
&\quad=A_n\frac{(q^2;q^4)_{(n+1)/4}^2}{(q^4;q^4)_{(n+1)/4}^2}\pmod{\Phi_{n}(q)^3},
\end{align*}
where $A_n$ is given by \eqref{Conject2-1}.

On the other hand, one sees that the following $q$-congruence  holds
\begin{align}
&\sum_{k=0}^{n-1}[8k-1]\frac{(q^{-1};q^4)_k^6(q^2;q^2)_{2k}}{(q^4;q^4)_k^6(q^{-1};q^2)_{2k}}q^{8k}\notag\\
&\quad\equiv A_n\frac{(q^2;q^4)_{(n+1)/4}^2}{(q^4;q^4)_{(n+1)/4}^2}\pmod{\Phi_{n}(q)^3}.
\label{section49}
\end{align}
by noticing that $(q^{-1};q^4)_k^6(q^2;q^2)_{2k}/(q^4;q^4)_k^6(q^{-1};q^2)_{2k}\equiv0\pmod{\Phi_{n}(q)^3}$ for $(n+1)/4\leq k\leq n-1$.

Finally, combining \eqref{section500}, \eqref{section49},  Lemma \ref{Th1proofLemma3} and ${\rm lcm}(\Phi_{n}(q)^3,[n])=[n]\Phi_{n}(q)^2$, Theorem \ref{Conject2} is concluded.

\end{proof}

\section{An open problem}
Guo and Zudilin \cite[Theorem 2]{GZ2} established the following $q$-supercongruence:
modulo $\Phi_n(q)^2$,
\begin{align}
\sum_{k=0}^{n-1}\frac{(q;q^2)_k^2(q^2;q^4)_k}{(q^2;q^2)_k^2(q^4;q^4)_k}q^{2k}
\equiv\begin{cases}
\dfrac{(q^2;q^4)_{(n-1)/4}^2}{(q^4;q^4)_{(n-1)/4}^2}q^{(n-1)/2} &\text{if}\; n\equiv1\pmod4, \\[2.5pt]
0 &\text{if}\; n\equiv3\pmod4,
\end{cases}
\label{eq:mod-phi}
\end{align}
which is a $q$-analogue of the (H.2) supercongruence of Van Hamme \cite{Ha96}.
Combining \eqref{more5} and \eqref{eq:mod-phi}, we have
\begin{align}\label{more-fin}
&\sum_{k=0}^{n-1}[8k+1]\frac{(q;q^4)_k^6(q^2;q^2)_{2k}}{(q^4;q^4)_k^6(q;q^2)_{2k}}q^{4k}
\equiv [n]q^{(1-n)/2}\sum_{k=0}^{n-1}\frac{(q;q^2)_k^2(q^2;q^4)_k}{(q^2;q^2)_k^2(q^4;q^4)_k}q^{2k}\pmod{[n]\Phi_n(q)^2}
\end{align}
for odd $n$. Letting $n=p^r$ be an odd prime power and then taking $q\to 1$ in \eqref{more-fin}, we get
\begin{align*}
&\sum_{k=0}^{p^r-1}(8k+1)\frac{(\frac{1}{4})_k^5(\frac{1}{2})_k}{(\frac{3}{4})_kk!^5}
\equiv p^r\sum_{k=0}^{p^r-1}\frac{(\frac{1}{2})_k^3}{k!^3} \pmod{p^{r+2}}.
\end{align*}

It seems that the above supercongruence can be strengthened as follows.
\begin{conjecture}Let $p$ be an odd prime and let $r\geqslant 1$. Then
\begin{align*}
&\sum_{k=0}^{p^r-1}(8k+1)\frac{(\frac{1}{4})_k^5(\frac{1}{2})_k}{(\frac{3}{4})_kk!^5}
\equiv p^r\sum_{k=0}^{p^r-1}\frac{(\frac{1}{2})_k^3}{k!^3}
\begin{cases}\pmod{p^{r+3}}, &\text{if $p\equiv 1\pmod{4}$},\\
\pmod{p^{2r+1}}, &\text{if $p\equiv 3\pmod{4}$}.
\end{cases}
\end{align*}
\end{conjecture}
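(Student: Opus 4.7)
The plan is to strengthen the $q$-supercongruence \eqref{more-fin} so that specialization $n = p^r$ followed by $q\to 1$ yields the sharper moduli. Writing $S_n(q)$ and $T_n(q)$ for the left- and right-hand side sums appearing in \eqref{more-fin}, the paper already shows $S_n(q)\equiv [n]q^{(1-n)/2}T_n(q)\pmod{[n]\Phi_n(q)^2}$, which gives only $p^{r+2}$ at $q\to 1$. To match the conjectured moduli I would prove the refinements
\begin{align*}
S_n(q) &\equiv [n]q^{(1-n)/2}T_n(q) \pmod{[n]\Phi_n(q)^3} \qquad \text{(case A)},\\
S_n(q) &\equiv [n]q^{(1-n)/2}T_n(q) \pmod{[n]^2\Phi_n(q)} \qquad \text{(case B)}.
\end{align*}
At $q=1$, $n=p^r$, one checks $[n]\Phi_n(q)^3 \mapsto p^{r+3}$ and $[n]^2\Phi_n(q) \mapsto p^{2r+1}$, matching the conjecture for $p\equiv 1\pmod 4$ and $p\equiv 3\pmod 4$ respectively.

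For case A, the strategy is to extend the two-parameter creative microscoping of Theorem \ref{Th1} to a three-parameter version. One would introduce a third indeterminate $c$ along with $a,b$, replacing one of the six $(q;q^4)_k$ copies in the numerator of \eqref{more5} by the natural pair $(cq;q^4)_k(q/c;q^4)_k$ and correspondingly modifying the denominator. The goal is to prove the resulting identity modulo $\Phi_n(q)\prod_{x\in\{a,b,c\}}(1-xq^{tn})(x-q^{tn})$ by a triple-parameter application of Watson's ${}_8\phi_7$ transformation combined with Sears' ${}_4\phi_3$ transformation to resolve the extra parameter, followed by the Chinese remainder theorem for three pairwise coprime polynomials. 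Taking $a,b,c\to 1$ by a third-order L'H\^{o}pital/Taylor argument, each factor $(1-xq^{tn})(x-q^{tn})$ specializes to $(1-q^{tn})^2$, contributing $\Phi_n(q)^2$; together with the outer $\Phi_n(q)$ this yields the congruence modulo $\Phi_n(q)^4$. Combining with Lemma \ref{Th1proofLemma3} would then give modulo $[n]\Phi_n(q)^3$, establishing case A.

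For case B, observe that $[n]^2\Phi_n(q) = \Phi_n(q)^3\prod_{1<d<n,\,d\mid n}\Phi_d(q)^2$. The plan is therefore to prove that for every divisor $s\mid n$ with $1<s<n$, the difference $S_n(q)-[n]q^{(1-n)/2}T_n(q)$ has a \emph{double} zero at every primitive $s$-th root of unity. This would refine the single-zero argument of Lemma \ref{Th1proofLemma3}, whose proof uses the asymptotic relation $c_q(\ell s+k)\sim c_q(\ell s)\,c_\zeta(k)/[r]$ as $q\to\zeta$ to sum the residues to $0$. The desired refinement would keep the $(q-\zeta)^2$ term in this expansion and show it also sums to $0$, which requires a second-order version of Lemma \ref{Th1proofLemma1} in which the pairing identity between the $k$-th and $(m-k)$-th terms is sharpened to match first derivatives as well. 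Once this double-zero is available, combining it with the mod $\Phi_n(q)^3$ divisibility from case A via the Chinese remainder theorem yields the required modulus $[n]^2\Phi_n(q)$.

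The main obstacle is case B. The pairing argument underlying Lemma \ref{Th1proofLemma3} is quite rigid, and it is not \emph{a priori} clear that it admits a second-order sharpening in the required form. An alternative route would be to induct on $r$ in the specialization $n=p^r$, exploiting a Dwork-type structural relation between $T_{p^r}(q)$ and $T_{p^{r-1}}(q)$ together with Van Hamme's refined supercongruence $\sum_{k=0}^{p-1}(1/2)_k^3/k!^3 \equiv 0 \pmod{p^3}$ for $p\equiv 3\pmod 4$; this would yield the arithmetic estimate $T_{p^r}(1)\equiv 0\pmod{p^{2r}}$ directly and reduce case B to showing $S_{p^r}(1)\equiv 0\pmod{p^{2r+1}}$ on its own, which may be attackable by a separate one-parameter creative microscoping analysis. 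Whether either route succeeds without genuinely new hypergeometric input is the principal uncertainty of this plan.
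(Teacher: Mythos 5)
This statement is the open problem of Section 5: the paper itself offers no proof (it only derives the weaker modulus $p^{r+2}$ from \eqref{more-fin}), so there is no proof of record to compare against, and your submission must stand on its own as a proof. It does not: both of your proposed $q$-refinements (case A modulo $[n]\Phi_n(q)^3$, case B modulo $[n]^2\Phi_n(q)$) are left unproven, and the concrete routes you sketch have identifiable obstructions. For case A, Watson's ${}_8\phi_7$ transformation \eqref{Wasttran} has no free slot for a third pair $(cq,q/c)$: in the proof of Theorem \ref{Th1} the parameters $b,c,d,e,q^{-m}$ are already consumed by $q^{1\pm tn}$ (the microscoping in $a$), by $bq$ and $q/b$, by $q^2$, and by the truncation. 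A third parameter forces you up to a ${}_{10}\phi_9$-level (Bailey-type) transformation, where the resulting ${}_5\phi_4$/${}_4\phi_3$ is in general not summable by the $q$-Pfaff--Saalsch\"utz formula; moreover the target right-hand side $[n]q^{(1-n)/2}T_n(q)$ is itself a truncated series rather than a closed form, so the endgame of Section 3 (evaluate the transformed series in closed form, then CRT and L'H\^opital) does not transfer, and you have not shown that whatever closed or non-closed form emerges agrees with $[n]q^{(1-n)/2}T_n(q)$ to the claimed higher order.

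For case B the situation is worse, as you acknowledge: you need the difference $S_n(q)-[n]q^{(1-n)/2}T_n(q)$ to vanish to order two at every primitive $s$-th root of unity for every divisor $s\mid n$ with $s>1$, but the root-of-unity pairing mechanism of Lemmas \ref{Th1proofLemma1}--\ref{Th1proofLemma3} only ever produces a single zero, and no second-order analogue of Lemma \ref{Th1proofLemma1} is stated or proved in your plan. The alternative Dwork-type induction also rests on unproved inputs (a structural relation between $T_{p^r}$ and $T_{p^{r-1}}$ at the needed $p$-adic depth, plus $S_{p^r}(1)\equiv 0\pmod{p^{2r+1}}$). The bookkeeping you do correctly ($[n]\Phi_n(q)^3\mapsto p^{r+3}$ and $[n]^2\Phi_n(q)\mapsto p^{2r+1}$ at $q\to1$, $n=p^r$) shows the refinements would suffice, but neither refinement is established, so the conjecture remains unproven by this proposal.
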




\begin{thebibliography}{99}
\bibitem{AO}  S. Ahlgren and K. Ono, {\it Gaussian hypergeometric series evaluation and Ap\'{e}ry number congruences},
J. Reine Angew. Math. {\bf 518} (2000), 187--212.

\bibitem{GR} G. Gasper, M. Rahman, {\it Basic hypergeometric series}, Second Edition, Encyclopedia of Mathematics and Its Applications, Vol. 96, Cambridge University Press, 2004.

\bibitem{G2} V.J.W. Guo, {\it $q$-Supercongruences modulo the fourth power of a cyclotomic polynomial via creative microscoping},
Adv. Appl. Math. {\bf 120} (2020)  Art. 102078.

\bibitem{G3} V.J.W. Guo, {\it A $q$-analogue of the (A.2) supercongruence of Van Hamme for primes $p\equiv 1\pmod 4$},
          Rev. R. Acad. Cienc. Exactas F\'is. Nat., Ser. A Mat. RACSAM {\bf 114} (2020), Art.~123.

\bibitem{Guonew}V.J.W. Guo, {\it A further $q$-analogue of Van Hamme's (H.2) supercongruence for primes $p\equiv 3\pmod{4}$}, Int. J. Number Theory {\bf 17} (2021), 1201--1206.

\bibitem{GS20} V.J.W. Guo and M.J. Schlosser, {\it A new family of $q$-supercongruences modulo the fourth power of a cyclotomic polynomial}, Results Math. {\bf 75} (2020), Art. 155.

\bibitem{GS21} V.J.W. Guo and M.J. Schlosser, {\it A family of q-hypergeometric congruences modulo the fourth power of a cyclotomic polynomial}, Israel J. Math. {\bf 240} (2020), 821--835.

\bibitem{GS} V.J.W. Guo and M.J. Schlosser, {\it Some $q$-supercongruences from transformation formulas for basic
hypergeometric series}, Constr. Approx. {\bf 53} (2021), 155--200.


\bibitem{GC21} V.J.W. Guo and C. Wei, {\it A q-congruence for a truncated ${}_4\phi_3$ series}, Czechoslovak Math. J., to appear.

\bibitem{GZ1} V.J.W. Guo and W. Zudilin, {\it A $q$-microscope for supercongruences}, Adv. Math. {\bf 346} (2019),
329--358.

\bibitem{GZ2}V.J.W. Guo and W. Zudilin, {\it On a $q$-deformation of modular forms}, J. Math. Anal. Appl. {\bf 475} (2019), 1636--1646.


\bibitem{Ki} T. Kilbourn, {\it An extension of the Ap\'{e}ry number supercongruence}, Acta Arith. {\bf  123} (2006), 335--348.

\bibitem{LW}L. Li and S.-D. Wang, {\it Proof of a $q$-supercongruence conjectured by Guo and Schlosser},
Rev. R. Acad. Cienc. Exactas F\'is. Nat., Ser. A Mat. RACSAM {\bf 114} (2020), Art. 190.

\bibitem{LP}J.-C. Liu and F. Petrov, {\it Congruences on sums of $q$-binomial coefficients},
Adv. Appl. Math. {\bf 116} (2020), Art.~102003.

\bibitem {Lo} L. Long, {\it Hypergeometric evaluation identities and supercongruences}, Pacific J. Math. {\bf 249} (2011), 405--418.

\bibitem {LR} L. Long and R. Ramakrishna, {\it Some supercongruences occurring in truncated hypergeometric series},
Adv. Math. {\bf 290} (2016), 773--808.

\bibitem{NP}H.-X. Ni and H. Pan, {\it Some symmetric $q$-congruences modulo the square of a cyclotomic polynomial},
J. Math. Anal. Appl. 481 (2020), Art. 123372.

\bibitem  {OZ16} R. Osburn and W. Zudilin, {\it On the (K.2) supercongruence of Van Hamme}, J. Math. Anal. Appl. {\bf 433} (2016), 706-711.

\bibitem {Sw15} H. Swisher, {\it On the supercongruence conjectures of Van Hamme}, Res. Math. Sci. {\bf 2} (2015), 1--21.

\bibitem {Ha96} L. Van Hamme, {\it Some conjectures concerning partial sums of generalized hypergeometric series}, in: $p$-Adic Functional
Analysis, Nijmegen, 1996, in: Lecture Notes in Pure and Appl. Math., vol. 192, Dekker, New York, 1997, pp. 223--236.

\bibitem{WY1} X. Wang and M. Yu, {\it Some new $q$-congruences on double sums},
 Rev. R. Acad. Cienc. Exactas F\'is. Nat., Ser. A Mat. RACSAM {\bf 115} (2021), Art. 9.

\bibitem{Wei}C. Wei, Some $q$-supercongruences modulo the fourth power of a cyclotomic polynomial, J. Combin. Theory, Ser. A 182 (2021), Art. 105469.

\bibitem{Zudilin2}W. Zudilin, {\it Congruences for $q$-binomial coefficients}, Ann. Combin. {\bf 23} (2019), 1123--1135.

\end{thebibliography}
\end{document}